\newcommand{\hh}{\operatorname{\mathcal{H}}}
\newcommand{\im}{\operatorname{Im}}
\newcommand{\Aut}{\operatorname{Aut}}
\newcommand{\Span}{\operatorname{span}}
\newcommand {\ignore}[1]  {}
\newcommand{\type}{\operatorname{type}}
\begin{document}
\newtheorem{theorem}{Theorem}[section]
\newtheorem{lemma}[theorem]{Lemma}
\newtheorem{definition}[theorem]{Definition}
\newtheorem{claim}[theorem]{Claim}
\newtheorem{example}[theorem]{Example}
\newtheorem{remark}[theorem]{Remark}
\newtheorem{proposition}[theorem]{Proposition}
\newtheorem{corollary}[theorem]{Corollary}
\newtheorem{observation}[theorem]{Observation}
\newtheorem{conjecture}[theorem]{Conjecture}
\newcommand{\subscript}[2]{$#1 _ #2$}
\newtheorem*{theorem*}{Theorem}
\author{Zohar Grinbaum-Reizis}
\affil{Department of Mathematics, Ben-Gurion University of the Negev, Be'er Sheva 84105, Israel, reiziszo@post.bgu.ac.il}
\author{Izhar Oppenheim}
\affil{Department of Mathematics, Ben-Gurion University of the Negev, Be'er Sheva 84105, Israel, izharo@bgu.ac.il}
\title{Curvature criterion for vanishing of group cohomology}
\maketitle

\begin{abstract}
We introduce a new geometric criterion for vanishing of cohomology for BN-pair groups. In particular, this new criterion yields a sharp vanishing of cohomology result for all BN-pair groups acting on non-thin affine building.
\end{abstract}

\section{Introduction}

In his seminal paper, Garland \cite{Garland} developed a machinery to prove vanishing of cohomology with real coefficients for groups acting on Bruhat-Tits buildings. This machinery, known today as ``Garland's method'', was generalized by Ballmann and \'{S}wi\c{a}tkowski \cite{Ballmann} to yield vanishing of cohomology with coefficients in unitary representations for groups acting properly and cocompactly on a simplicial complexes. These vanishing results had several applications (see \cite{Casselman}, \cite{Schneider} and more recently \cite{Andreas}) and Garland's method also had several applications in combinatorics (see \cite{Handbook}[Section 22.2] and references therein).  

The main idea behind Garland's method is that vanishing of cohomology can be deduced for a group acting on a simplicial complex, given that the spectral gaps of the links of the simplicial complex are large enough. Consequently, it implies vanishing of cohomology up to rank for a group acting on a Bruhat-Tits building if the thickness of the building is large enough. However, it was already noted in Garland's original paper \cite{Garland}, that his method does not yield a sharp result in the case of affine buildings. Namely, it was known before Garland (by the work of Kazhdan \cite{Kazhdan}) that a group with a proper, cocompact action on a non-thin affine building has property (T) (i.e., its first cohomology vanishes), but the thickness condition given in Garland's work did not hold for every non-thin affine building. In other words, there are cases of non-thin affine buildings that are not covered by Garland's criterion for vanishing of cohomology. Later, Casselman \cite{Casselman} was able to remove this restriction and prove vanishing of cohomology for every group acting on a non-thin affine buildings, but his proof used entirely different methods.

In \cite{Dymara}, Dymara and Januszkiewicz offered a different point of view of Garland's method: By assuming that fundamental domain is a single simplex, they showed that the spectral gap can be replaced with the notion of angle between subspaces. This change of perspective was very fruitful: Dymara and Januszkiewicz \cite{Dymara} used it to show vanishing of cohomology even when the stabilizers of vertices are not compact subgroups, Ershov, Jaikin and Kassabov pushed the idea of angle between subgroups to prove several results regarding property (T)(see \cite{Ershov}, \cite{Kassabov}, \cite{EJZK}) and the second-named author used these ideas to prove Banach versions of property (T) and vanishing of cohomology (see \cite{AveProjOpp}, \cite{VanBan}).

In this paper, we use the approach of Dymara and Januszkiewicz \cite{Dymara} and the ideas of Kassabov regarding angle between subspaces \cite{Kassabov} and get the following result for vanishing of cohomology for BN-pair groups:
\begin{theorem}
\label{vanishing coho for general building thm}
Let $G$ be a BN-pair group acting on a building $X$ such that $X$ is $n$-dimensional with $n \geq 2$ and all the $1$-dimensional links of $X$ are finite. Denote $C$ to be the cosine matrix of the Coxeter system associated with the Coxeter group that arises from the BN-pair of $G$ and $\widetilde{\mu}$ to be the smallest eigenvalue of $C$. If $X$ has thickness $\geq q+1$, where $q \geq 2$ and $\widetilde{\mu} > 1 - \frac{q+1}{2 \sqrt{q}}$, then:
\begin{enumerate}
\item For every continuous unitary representation $\pi$ of $G$, $H^k (X, \pi) =0$ for every $1 \leq k \leq n-1$.
\item If $1 \leq k \leq n-1$ is a constant such that all the $k$-dimensional links of $X$ are finite, then $H^i (G, \pi) = 0$ for every $1 \leq i \leq k$ and every continuous unitary representation $\pi$ of $G$. 
\end{enumerate}
\end{theorem}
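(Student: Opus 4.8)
The plan is to derive Theorem~\ref{vanishing coho for general building thm} from a general \emph{curvature criterion} for strongly transitive simplicial actions, which I would establish in the body of the paper along the lines of Dymara--Januszkiewicz and Kassabov. When a group $G$ acts on an $n$-dimensional simplicial complex $X$ with a single chamber as fundamental domain, the $k$-th coboundary Laplacian admits a local description through the subspaces of cochains attached to the panels of the fundamental chamber, and the role played in Garland's method by the spectral gaps of the links is taken over by the \emph{angles} between these subspaces. The criterion I aim for has the shape: if every $1$-dimensional link of $X$ has normalized non-trivial spectral radius at most $\lambda$, and if $\widetilde{\mu}$ --- the smallest eigenvalue of the cosine matrix $C$ of the ambient Coxeter system --- satisfies $\widetilde{\mu} > 1 - \tfrac{1}{\lambda}$, then $H^k(X,\pi) = 0$ for all $1 \le k \le n-1$ and every continuous unitary representation $\pi$. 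The feature being exploited is that $C$, hence $\widetilde{\mu}$, is a \emph{global} invariant of the Coxeter diagram --- a kind of curvature --- whereas Garland's hypotheses only see codimension-$2$ links; this is precisely why one recovers a sharp statement on affine buildings, where $C$ is positive semidefinite and $\widetilde{\mu} = 0$.

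Granting such a criterion, part~(1) is a matter of verifying its hypotheses. A BN-pair acts strongly transitively on its building, so the fundamental domain is a single chamber and the relevant Coxeter system, with cosine matrix $C$, is the one attached to the BN-pair. Every $1$-dimensional link of $X$ is a rank-$2$ residue, i.e.\ a finite thick generalized $m$-gon of thickness $\ge q+1$; by the Feit--Higman classification and the explicit eigenvalues of incidence graphs of generalized polygons, the normalized non-trivial spectral radius of such a link is at most $\tfrac{2\sqrt{q}}{q+1}$ --- a bound uniform in $m$ and monotone in the parameters, the worst case being approached as $m$ grows, in the spirit of Alon--Boppana. Taking $\lambda = \tfrac{2\sqrt{q}}{q+1}$, the hypothesis $\widetilde{\mu} > 1 - \tfrac{1}{\lambda}$ of the criterion becomes exactly $\widetilde{\mu} > 1 - \tfrac{q+1}{2\sqrt{q}}$, which is assumed, and part~(1) follows. (If $X$ happens to be spherical the conclusion is in any case immediate, a spherical building of dimension $n$ being $(n-1)$-acyclic.)

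For part~(2) I would pass from $H^\bullet(X,\pi)$ to $H^\bullet(G,\pi)$ via the standard equivariant spectral sequence $E_1^{p,q} = \bigoplus_{\sigma} H^q(G_\sigma,\pi_\sigma) \Rightarrow H^{p+q}(G,\pi)$ attached to the contractible $G$-complex $X$, where $\sigma$ ranges over orbit representatives of $p$-simplices, $G_\sigma$ is the stabilizer --- a standard parabolic, by strong transitivity --- and $\pi_\sigma$ is $\pi$ twisted by orientation characters; the $q=0$ row assembles into the complex computing the $H^\bullet(X,\pi)$ of part~(1). One then inducts on $\dim X$: the parabolic $G_\sigma$ acts, through its Levi quotient, on the link $\mathrm{lk}_X(\sigma)$, again a building, whose $1$-dimensional links are $1$-dimensional links of $X$ (hence finite, of thickness $\ge q+1$) and whose Coxeter system is a sub-system of the original one, so that its cosine matrix is a principal submatrix of $C$ and, by Cauchy interlacing, has smallest eigenvalue $\ge \widetilde{\mu} > 1 - \tfrac{q+1}{2\sqrt{q}}$; the inductive hypothesis then yields $H^q(G_\sigma,\pi) = 0$ in the required range. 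The finiteness assumption on the $k$-dimensional links of $X$ is what lets this induction reach total cohomological degree $k$, and feeding the resulting row-by-row vanishing into the spectral sequence gives $H^i(G,\pi) = 0$ for $1 \le i \le k$.

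The main obstacle is the curvature criterion itself: one must show that the single number $\widetilde{\mu}$, the bottom of the spectrum of the \emph{global} cosine matrix, governs the interaction of the panel-subspaces across the \emph{whole} Coxeter diagram rather than just within its $2\times 2$ blocks. Concretely this means marrying the Dymara--Januszkiewicz local form of the Laplacian with Kassabov-type estimates for families of subspaces of prescribed pairwise angles, and identifying the resulting positivity condition with a spectral condition on $C$; extracting the clean threshold $1 - \tfrac{1}{\lambda}$ from this comparison, uniformly in the cohomological degree $k$, is the delicate point. The auxiliary ingredients --- the sharp Feit--Higman spectral estimate for generalized polygons and the interlacing step in part~(2) --- I expect to be routine once the criterion is in hand.
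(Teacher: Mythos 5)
Your part~(1) has the right overall architecture --- reduce to a positivity (``curvature'') criterion and verify it for buildings via Feit--Higman --- but the step where you connect the link spectra to $\widetilde{\mu}$ contains a genuine gap. Your criterion takes as input only a \emph{uniform} bound $\lambda_{i,j}\le\lambda$ on the second eigenvalues of the rank-$2$ residues together with $\widetilde{\mu}>1-\tfrac{1}{\lambda}$. A uniform bound alone cannot relate the matrix $A(X)$ (off-diagonal entries $-\lambda_{i,j}$) to the Coxeter cosine matrix $C$ (off-diagonal entries $-\cos(\pi/m_{i,j})$): from $\lambda_{i,j}\le\lambda$ one only gets that the smallest eigenvalue of $A(X)$ is at least $1-n\lambda$, which is useless for large $n$ and makes no reference to $\widetilde{\mu}$ at all. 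What is actually needed, and what the paper proves in Lemma \ref{bound on A in building lemma}, is the \emph{entrywise, per-link} bound $\lambda_{i,j}\le \tfrac{2\sqrt{q}}{q+1}\cos(\tfrac{\pi}{m_{i,j}})$, checked case by case over the Feit--Higman list $m_{i,j}\in\{2,3,4,6,8\}$ using Garland's explicit eigenvalue computations. This gives the entrywise domination $A(X)\succeq \tfrac{2\sqrt{q}}{q+1}C+(1-\tfrac{2\sqrt{q}}{q+1})I$, and a Perron--Frobenius monotonicity statement (Proposition \ref{smallest e.v. proposition}: for matrices with unit diagonal and nonpositive off-diagonal entries, entrywise domination implies domination of the smallest eigenvalues) then yields $\mu\ge \tfrac{2\sqrt{q}}{q+1}\widetilde{\mu}+1-\tfrac{2\sqrt{q}}{q+1}$, which is positive exactly when $\widetilde{\mu}>1-\tfrac{q+1}{2\sqrt{q}}$. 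So the ``clean threshold'' does not come from a single spectral-radius number; it comes from the proportionality of each link eigenvalue to the corresponding entry of $C$. Separately, the criterion itself --- that positive definiteness of the cosine matrix of the subspaces $V_i(\pi)$ forces the decomposition $\mathcal{H}_\tau=\bigoplus_{\eta\subseteq\tau}\mathcal{H}^\eta$ required by Dymara--Januszkiewicz --- is the technical heart of the paper (an induction on $n$ via Kassabov's estimate for angles between intersections, Theorem \ref{Decomposition Thm}), and you have deferred it entirely, so as written the proposal assumes the main theorem rather than proving it.

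For part~(2) you propose an equivariant spectral sequence plus an induction on links with Cauchy interlacing. The paper does none of this: once the decomposition $\mathcal{H}_\tau=\bigoplus_{\eta\subseteq\tau}\mathcal{H}^\eta$ holds for all faces of a chamber, both the vanishing of $H^i(G,\pi)$ for $1\le i\le k$ (under finiteness of the $k$-dimensional links) and the full computation of $H^*(G,\pi)$ via the Davis chamber are quoted directly from Dymara--Januszkiewicz (Theorems \ref{DJ thm} and \ref{DJ thm2}). Your alternative route would additionally have to deal with $X$ not being contractible in general and with stabilizers acting on links only through quotients; neither issue arises in the paper's argument.
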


As a corollary, we get another proof of the sharp vanishing result for groups acting on affine buildings of Casselman: 
\begin{corollary}
\label{Vanishing for affine buildings}
Let $G$ be a BN-pair group such that the building $X$ coming from the BN-pair of $G$ is an $n$-dimensional, non-thin affine building, with $n\geq 2$. Then for every unitary continuous representation $\pi$ of $G$, $H^k (G, \pi) =0$ for every $1 \leq k \leq n-1$.
\end{corollary}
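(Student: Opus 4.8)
The plan is to derive the corollary directly from Theorem~\ref{vanishing coho for general building thm}; everything reduces to checking that theorem's hypotheses for a non-thin affine building. Write $(W,S)$ for the Coxeter system coming from the BN-pair of $G$. Since $X$ is an affine building, $(W,S)$ is an affine Coxeter system, that is, a direct product of irreducible affine Coxeter systems. Its cosine matrix $C = (c_{st})_{s,t\in S}$, with $c_{ss}=1$ and $c_{st}=-\cos(\pi/m_{st})$ for $s\neq t$, is the Gram matrix of the simple roots of the associated affine root system. By the classical dichotomy between finite and affine Coxeter systems (Bourbaki, Humphreys), this Gram matrix is positive semidefinite and degenerate, with radical of dimension equal to the number of irreducible factors; in particular it has $0$ as an eigenvalue and no negative eigenvalues. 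Hence $\widetilde{\mu}=0$.

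I would then verify the numerical condition. Since $X$ is non-thin, every panel lies in at least three chambers, so $X$ has thickness $\geq 3$ and Theorem~\ref{vanishing coho for general building thm} applies with $q=2$ (recall $n\geq 2$). The inequality required is $\widetilde{\mu} > 1 - \frac{q+1}{2\sqrt q}$, and for every integer $q\geq 2$,
\[
1 - \frac{q+1}{2\sqrt q} \;=\; -\,\frac{(\sqrt q - 1)^2}{2\sqrt q} \;<\; 0 \;=\; \widetilde{\mu},
\]
so the hypothesis holds (taking $q=2$, and in fact for the actual thickness of $X$ as well). This is exactly the point at which Garland's original thickness bound could fail on an affine building, whereas the sharp bound of Theorem~\ref{vanishing coho for general building thm} does not.

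It then remains to apply part~(2) of the theorem with $k=n-1$. The $(n-1)$-dimensional links of $X$ are its vertex links; when $X$ is locally finite --- which is precisely the setting in which the finiteness hypothesis of Theorem~\ref{vanishing coho for general building thm} is available --- these are finite simplicial complexes, hence so are all links of $X$ of smaller dimension, and in particular the $1$-dimensional links appearing in the hypothesis. Theorem~\ref{vanishing coho for general building thm}(2) then yields $H^i(G,\pi)=0$ for all $1\leq i\leq n-1$ and every continuous unitary representation $\pi$ of $G$, which is the assertion. The only step with real content is the identification $\widetilde{\mu}=0$ (together with the elementary remark that the admissible range for $\widetilde{\mu}$ contains $0$ for all $q\geq 2$); the rest is bookkeeping already packaged inside Theorem~\ref{vanishing coho for general building thm}.
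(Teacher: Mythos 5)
Your proposal is correct and follows essentially the same route as the paper: reduce to Theorem \ref{vanishing coho for general building thm}, use positive semidefiniteness of the affine cosine matrix to get $\widetilde{\mu}=0$, note that non-thin gives thickness $\geq 3$ so the condition $0 > 1-\frac{q+1}{2\sqrt{q}}$ holds, and apply part (2) with $k=n-1$ since the proper links are finite. The only cosmetic differences are your explicit algebraic identity $1-\frac{q+1}{2\sqrt{q}}=-\frac{(\sqrt{q}-1)^2}{2\sqrt{q}}$ and your citation of Bourbaki/Humphreys where the paper cites Davis.
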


This Theorem is a special case of a more general theorem that we will explain below after introducing the needed framework.

We start by introducing some terminology regarding simplicial complexes. Throughout, $X$ will denote a simplicial complex and $X(i)$ will denote the set of the $i$-dimensional simplices of $X$ (and we will use the convention that $X(-1) = \lbrace \emptyset \rbrace$). Below, we will use the following definitions: 
\begin{itemize}
\item The simplicial complex $X$ is called \textit{pure $n$-dimensional} if the top-dimensional simplices in $X$ are of dimension $n$ and every simplex in $X$ is contained in an $n$-dimensional simplex.
\item A pure $n$-dimensional simplicial complex $X$ is called \textit{gallery connected} if for every $\sigma, \sigma ' \in X(n)$, there is a sequence of $n$-dimensional simplices $\sigma = \sigma_1,..., \sigma_k = \sigma'$  such that for every $i$, $\sigma_i \cap \sigma_{i+1}$ is a simplex of dimension $n-1$. 
\item A pure $n$-dimensional simplicial complex $X$ is called $(n+1)$-\textit{partite} (or colorable) if there are disjoint sets of vertices $S_0,...,S_n$ of $X$ called \textit{the sides of $X$} such that every $\sigma \in X(n)$ has exactly one vertex in each side. 
\item For a $(n+1)$-partite simplicial complex $X$ with sides $S_0,...,S_n$, we define \textit{a type function}, denoted $\type : X \rightarrow 2^{\lbrace 0,...,n \rbrace}$, by $\type (\tau) = \lbrace i : \tau \cap S_i \neq \emptyset \rbrace$.
\item For a simplex $\sigma \in X$, the \textit{link of $\sigma$}, denoted $X_\sigma$ is the simplicial complex defined as $X_\sigma = \lbrace \tau \in X : \tau \cap \sigma = \emptyset, \tau \cup \sigma \in X \rbrace$ (by this definition, $X_\emptyset = X$). Note that if $X$ is pure $n$-dimensional and $(n+1)$-partite, then for every $\sigma \in X(i)$,  $X_\sigma$ is pure $(n-i-1)$-dimensional and $(n-i)$-partite.
\end{itemize}
 
Following Dymara and Januszkiewicz \cite{Dymara}, we work in the following setup: let $n \geq 2$ and $X$ be a pure $n$-dimensional, $(n+1)$-partite simplicial complex with sides $S_0,...,S_n$ and let $G$ be a closed subgroup of $\Aut(X)$ with respect to the compact-open topology. We consider the following properties for the couple $(X,G)$:
\begin{enumerate}[label=($\mathcal{B}${{\arabic*}})]
\item All the $1$-dimensional links are finite.
\item All the links of dimension $\geq 1$ are gallery connected. 
\item All the links are either finite or contractible (including $X$ itself). 
\item The group $G$ acts simplicially on $X$, such that the action is transitive on $X(n)$ and type preserving, i.e., for every $\tau \in X$ and every $g \in G$, $\type (\tau) = \type (g.\tau)$. 
\end{enumerate}

Next, we define the cosine matrix of $X$ that will play a central role in our criterion for vanishing of cohomology. In order to do so, we first recall some basic facts regarding simple random walks on graphs. For a finite graph $(V,E)$, the simple random walk on $(V,E)$ is an operator $M: \ell^2 (V) \rightarrow \ell^2 (V)$ defined as 
$$M \phi (v) = \frac{1}{d(v)} \sum_{u, \lbrace u,v \rbrace \in E} \phi (u),$$
where $d(v)$ is the valency of $v$, i.e., the number of neighbors of $v$. We further recall that $M$ is (similar to a) self-adjoint operator and as such has a spectral decomposition. Furthermore, all the eigenvalues of $M$ are in the interval $[-1,1]$ and if $(V,E)$ is connected, then $1$ is an eigenvalue of $M$ with multiplicity $1$ and all the other eigenvalues of $M$ are strictly smaller than $1$. Finally, we recall that if $(V,E)$ is a connected bipartite graph, then the second largest eigenvalue of $M$ is in the interval $[0,1)$. 

\begin{definition}[The cosine matrix of $X$]
\label{The cosine matrix of X def}
Let $n \geq 2$ and $X$ be a pure $n$-dimensional, $(n+1)$-partite simplicial complex with sides $S_0,...,S_n$ and let  $G$ be a closed subgroup of $\Aut(X)$ with respect to the compact-open topology. Assume that $(X,G)$ fulfill $(\mathcal{B} 1)- (\mathcal{B} 4)$ and define the cosine matrix of $X$, denoted $A = A (X)$ as follows: denote $\lambda_{i,j}$ to be the second largest eigenvalue of the random walk of $X_{\tau}$ for $\tau \in X(n-2), \type (\tau) = \lbrace 0,...,n \rbrace \setminus \lbrace i,j \rbrace$. Define $A$ to be the $(n+1) \times (n+1)$ matrix indexed by $\lbrace 0,...,n \rbrace$ as
$$A_{i,j} = \begin{cases}
1 & i =j \\
- \lambda_{i,j} & i \neq j
\end{cases}.$$ 
\end{definition}

\begin{remark}
We note that if $(X,G)$ fulfill $(\mathcal{B} 1)- (\mathcal{B} 4)$, then for every two simplices $\tau, \tau '$, if $\type (\tau) = \type (\tau ')$, then there is $g \in G$ such that $g. \tau = \tau '$ and in particular links $X_{\tau}$ and $X_{\tau '}$ are isomorphic as simplicial complexes and thus the matrix $A$ defined above is well-defined. 
\end{remark}

\begin{remark}
The reason we called $A$ the ``cosine matrix of $X$'' will be further explained below. For now we just note that the definition above coincides with the definition of the cosine matrix of a Coxeter group $G$ acting on a complex $X$ defined in \cite[Definition 6.8.11]{Davis} (see Definition \ref{Cosine coxter.M} below). 
\end{remark}  

Using the definition of the cosine matrix of $X$, we can state our main vanishing result:
\begin{theorem}
\label{General vanishing coho thm}
Let $n \geq 2$, $X$ be a pure $n$-dimensional, $(n+1)$-partite simplicial complex with sides $S_0,...,S_n$ and  $G$ be a closed subgroup of $\Aut(X)$ with respect to the compact-open topology. If $(X,G)$ fulfill $(\mathcal{B} 1)- (\mathcal{B} 4)$ and the cosine matrix of $X$ is positive definite, then:
\begin{enumerate}
\item For every continuous unitary representation $\pi$ of $G$, $H^k (X, \pi) =0$ for every $1 \leq k \leq n-1$.
\item If $1 \leq k \leq n-1$ is a constant such that all the $k$-dimensional links of $X$ are finite, then $H^i (G, \pi) = 0$ for every $1 \leq i \leq k$ and every continuous unitary representation $\pi$ of $G$. 
\end{enumerate}
\end{theorem}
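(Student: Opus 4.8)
We outline the argument, which follows the Dymara--Januszkiewicz reduction to a single fundamental simplex and then turns the vanishing statement into a Garland-type estimate in which positive-definiteness of $A$ plays the role of ``the spectral gap is large enough''.

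\medskip
\noindent\textbf{Step 1 (reduction to a nerve complex).}
By $(\mathcal{B} 4)$ fix the fundamental simplex $\sigma=\{v_0,\dots,v_n\}$ with $v_i\in S_i$, and for $\emptyset\neq A\subseteq\{0,\dots,n\}$ put $\sigma_A=\{v_i:i\in A\}$ and $G_A=\Stab_G(\sigma_A)=\bigcap_{i\in A}G_{\{i\}}$ (these are closed subgroups, so $\pi^{G_A}$ is a closed subspace and the Hilbert-space operations below are legitimate); note $A\subseteq B$ gives $G_B\subseteq G_A$, hence $\pi^{G_A}\subseteq\pi^{G_B}$. Since the action on $X(k)$ is type-preserving and transitive on each type, and the coloring provides a $G$-invariant orientation of all simplices, a $G$-equivariant $\pi$-valued $k$-cochain is determined by its values on the $\sigma_A$ with $|A|=k+1$, each of which must lie in $\pi^{G_A}$, and the simplicial coboundary becomes the alternating sum of the inclusions $\pi^{G_A}\hookrightarrow\pi^{G_B}$ over $A\subset B$, $|B|=|A|+1$. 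Thus the complex computing $H^\bullet(X,\pi)$ is isomorphic to the finite Hilbert-space complex $N^\bullet$ with $N^k=\bigoplus_{|A|=k+1}\pi^{G_A}$ and coboundary $d_k\colon N^k\to N^{k+1}$ as above, and it suffices to show $N^\bullet$ is exact in degrees $1\le k\le n-1$; equivalently, that the combinatorial Laplacian $\Delta_k=d_{k-1}d_{k-1}^{\ast}+d_k^{\ast}d_k$ satisfies $\langle\Delta_k f,f\rangle\ge c\,\|f\|^2$ for all $f\in N^k$ and some $c>0$, for $1\le k\le n-1$.

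\medskip
\noindent\textbf{Step 2 (Garland's local estimate).}
For $i\neq j$ set $\tau=\sigma_{\{0,\dots,n\}\setminus\{i,j\}}\in X(n-2)$; by $(\mathcal{B} 1)$ and $(\mathcal{B} 2)$ the link $X_\tau$ is a finite, connected, bipartite graph with sides the type-$i$ and type-$j$ vertices, whose random walk has second largest eigenvalue $\lambda_{i,j}=-A_{i,j}$. A Garland-type spectral computation on $X_\tau$ (expander mixing, cf.\ \cite{Garland}) shows that inside $\pi^{G_\tau}$ the cosine of the angle between the codimension-one pieces $\pi^{G_{\tau\cup\{v_i\}}}$ and $\pi^{G_{\tau\cup\{v_j\}}}$ is at most $\lambda_{i,j}$, while their intersection is $\pi^{G_{\{0,\dots,n\}}}$, as expected. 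Carrying out the same computation inside every link $X_{\sigma_A}$ — whose codimension-two links are again such graphs and produce the same $\lambda$'s — yields, for all admissible $A$ and $i,j\notin A$, the angle bound between $\pi^{G_{A\cup\{i\}}}$ and $\pi^{G_{A\cup\{j\}}}$ by $\lambda_{i,j}$ together with the expected description of their intersection. The gallery connectedness in $(\mathcal{B} 2)$ also guarantees that every link, hence every sub-nerve of $N^\bullet$, is connected, so no spurious zero-modes appear.

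\medskip
\noindent\textbf{Step 3 (from angles to exactness).}
This is the heart of the proof and, I expect, the main obstacle. Feeding the local data of Step 2 into Garland's localization, the lower bound on $\Delta_k$ for $X$ is bootstrapped from the lower bounds on the degree-$(k-1)$ Laplacians of the vertex links $X_v$: passing to $X_v$ replaces $A$ by the principal submatrix indexed by $\{0,\dots,n\}\setminus\type(v)$, which is again positive definite, so one inducts on $n$. The base case $n=2$ is the assertion that a triple of closed subspaces $\pi^{G_{\{0\}}},\pi^{G_{\{1\}}},\pi^{G_{\{2\}}}$ whose pairwise cosines $\lambda_{i,j}$ make $A$ positive definite has its nerve complex exact at the middle term — this is exactly the higher-degree form of Kassabov's angle criterion for subspace arrangements \cite{Kassabov,Dymara}. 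The delicate point is that the only genuinely geometric input is the \emph{pairwise} (codimension-two) data, and one must show it assembles — precisely when the full $(n+1)\times(n+1)$ matrix $A$ is positive definite — into a spectral gap for $\Delta_k$ in every intermediate degree; carrying the constants through one obtains $\langle\Delta_k f,f\rangle\ge c\,\|f\|^2$ with $c$ a positive function of the smallest eigenvalue of $A$, which forces vanishing of $H^k(X,\pi)$ in both the reduced and the non-reduced sense for $1\le k\le n-1$.

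\medskip
\noindent\textbf{Step 4 (passage to group cohomology).}
By $(\mathcal{B} 3)$ the complex $X$ is contractible (the degenerate case of finite $X$ being treated separately), so $H^\bullet(G,\pi)$ is computed by the equivariant spectral sequence of the $G$-action on $X$, with $E_1^{p,q}=\bigoplus_{[\sigma]\in X(p)/G}H^q(G_\sigma,\pi)$, $E_1^{\bullet,0}=N^\bullet$, and hence $E_2^{p,0}=H^p(X,\pi)$. Each $G_\sigma$ acts on the link $X_\sigma$, which again satisfies $(\mathcal{B} 1)$--$(\mathcal{B} 4)$ with cosine matrix a positive-definite principal submatrix of $A$ and with $\dim X_\sigma<n$; moreover, since the $k$-dimensional links of $X$ are finite, so are all links of simplices of dimension $\ge k$. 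Applying the theorem inductively to the pairs $(X_\sigma,G_\sigma)$ kills $H^q(G_\sigma,\pi)$ for $q\ge1$ in the range needed to collapse the spectral sequence onto its bottom row in total degrees $\le k$, so that $H^i(G,\pi)\cong H^i(X,\pi)$, which vanishes for $1\le i\le k$ by Steps 1--3. The finiteness hypothesis on the $k$-dimensional links is precisely what makes this range-chasing close.
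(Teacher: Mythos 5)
Your Steps 1, 2 and 4 are essentially the Dymara--Januszkiewicz reduction, which the paper takes as a black box: by \cite[Theorems 5.1, 5.2, 7.1]{Dymara}, all of the claimed vanishing follows once one establishes the decomposition $\mathcal{H}_{\tau}=\bigoplus_{\eta\subseteq\tau}\mathcal{H}^{\eta}$ for every face $\tau$ of the fundamental chamber, and your Step 2 (the angle bound $\cos\angle(V_i,V_j)\le\lambda_{i,j}$) is exactly \cite[Lemma 4.6]{Dymara}, restated as Lemma \ref{e.v. bounds cosine lemma}. The problem is Step 3, which you yourself flag as ``the main obstacle'': it is not a proof but a description of what would need to be proved. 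A Garland-type localization of the Laplacian $\Delta_k$ naturally requires numerical lower bounds on the local spectral gaps in every intermediate dimension, and the entire point of the theorem is that no such bounds are assumed --- the only hypothesis is positive definiteness of the full matrix $A$, which can hold even when individual $\lambda_{i,j}$ are close to $1$. You never exhibit the mechanism by which positive definiteness of $A$ enters the estimate for $\Delta_k$, and citing ``the higher-degree form of Kassabov's angle criterion'' for the base case does not supply it.

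What actually fills this gap in the paper is the Decomposition Theorem (Theorem \ref{Decomposition Thm}), proved by induction on $n$, and two ingredients of that proof are absent from your sketch. First, to make the induction close one must show that positive definiteness is inherited by the intersected family $V_0\cap V_n,\dots,V_{n-1}\cap V_n$ inside $\mathcal{H}'=V_n$, with the \emph{same} lower bound $\mu$ on the smallest eigenvalue; this uses Kassabov's bound on $\cos\angle(V_i\cap V_n,V_j\cap V_n)$ (Lemma \ref{angle between intersections lemma}) together with the matrix congruence $A=B^t\left(\begin{smallmatrix}A''&0\\0&1\end{smallmatrix}\right)B$ of Lemma \ref{p.d. lemma}. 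Your remark that passing to a link replaces $A$ by a principal submatrix is not the relevant statement: the subspaces attached to the link of $v_n$ are the \emph{intersections} $V_i\cap V_n$, whose pairwise angles are controlled by $A'$, not by a principal submatrix of $A$. Second, the linear-independence step: given $\sum_\eta v_\eta=0$ one must regroup the summands into vectors $u_i\in V_i\cap\bigcap_{j>i}(V_i\cap V_j)^{\perp}$ so that $0=\bigl\Vert\sum_i u_i\bigr\Vert^2\ge(\Vert u_0\Vert,\dots,\Vert u_n\Vert)\,A\,(\Vert u_0\Vert,\dots,\Vert u_n\Vert)^t\ge\mu\sum_i\Vert u_i\Vert^2$; this is the one place where positive definiteness of $A$ is actually used, and nothing in your Step 3 plays this role. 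Until these two points are supplied, the proof is incomplete at precisely the step you identified as its heart.
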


In case where $X$ is a building, we will show in Section \ref{Vanishing of cohomology for groups acting on buildings sec} that the smallest eigenvalue of the cosine matrix of $X$ can be bounded from below by a function of the smallest eigenvalue of the Coxeter system and the thickness of the building. Using this fact, we will show that Theorem \ref{General vanishing coho thm} implies Theorem \ref{vanishing coho for general building thm} stated above.

\begin{remark}
The machinery developed in \cite{Dymara} allows also to compute the cohomology and not just prove vanishing. However, the statement of the computation includes introducing additional terminology and notation and therefore it is omitted from the introduction. A more general statements of Theorems \ref{vanishing coho for general building thm}, \ref{General vanishing coho thm} that include computation of cohomology (even when it does not vanish) appear in the body of this paper - see Theorems \ref{general statement buildings thm}, \ref{general statement of general thm}.
\end{remark}


While the Theorems stated above concern vanishing of group cohomology, the ideas of Dymara and Januszkiewicz \cite{Dymara} reduce this problem to showing a decomposition theorem in Hilbert spaces and the main tool that they used to prove such a decomposition was the idea of angle between subspaces. In the technical heart of this paper we use the results of Kassabov \cite{Kassabov} regarding angles between subspaces to prove a general decomposition theorem in Hilbert spaces that is interesting by its own right. After doing this, we show how this decomposition can be applied to deducing vanishing of cohomology in the general framework of Dymara and Januszkiewicz and how to apply this result for BN-pair buildings.  

\paragraph*{Geometric interpretation of Theorem \ref{General vanishing coho thm}.} Garland in his original paper used the term ``p-adic curvature'' for the eigenvalues of the random walks on the links. This term was used since the results were analogous to those of Matsushima who proved similar results for locally symmetric spaces. The condition for cohomology vanishing can be seen as a positive curvature condition as we will now explain. We start by recalling the following basic facts (see \cite[Chapters 6,7]{AVSBook}): let $v_0,...,v_n$ be points in general position in positive quadrant of the unit sphere of $\mathbb{R}^{n+1}$. These points can be thought as the vertices of a spherical simplex that is bounded by the subspaces $V_i = \Span \lbrace v_0,...,\hat{v_i},...,v_n \rbrace$. The cosine matrix of these subspaces, defined as:
$$A (V_0,...,V_n ) = \begin{cases} 
1 & i =j \\
- \cos (\angle (V_i, V_j)) & i \neq j 
\end{cases}.$$
In particular, the volume of the spherical simplex can be bounded from below by a function on the smallest eigenvalue of $A (V_0,...,V_n )$ (see \cite[Chapter 7, Proof of Theorem 2.1]{AVSBook}). 
Using the facts above as our geometric motivation, we note that the fact that cosine matrix of $X$, $A (X)$, is positive definite implies that there is a constant $\alpha >0$ such that for every unitary representation $(\pi, \mathcal{H})$ and every equivariant ``embedding'' of our simplicial complex $X$ in the unit sphere of $\mathcal{H}$, the spherical simplex spanned by the image of an $n$-simplex in $X$ has a spherical volume of at least $\alpha$. This statement is not precise, since our definition of an equivariant embedding is non-standard. A precise definition and an exact statement are given in the Appendix.

\paragraph*{Structure of this paper.} In Section \ref{Decomposition Theorem in Hilbert spaces sec}, we prove general a decomposition theorem in Hilbert spaces. In Section \ref{Vanishing of Cohomology for groups acting on simplicial complexes sec}, we show how this decomposition theorem implies vanishing of cohomology. In Section \ref{Vanishing of cohomology for groups acting on buildings sec}, we deduce a criterion for vanishing of cohomology for BN-pair groups and shows that in the case of affine buildings this criterion gives a sharp vanishing result. In the appendix, we give a further geometric interpretation for the vanishing criterion of Theorem \ref{General vanishing coho thm}.

\paragraph*{Acknowledgment.} The results of this paper are part of the M.Sc. project of the first-named author and she wishes to thank the Ben-Gurion University in which this research was conducted. The second-named author was partially supported by ISF grant no. 293/18.

\section{Decomposition Theorem in Hilbert spaces}
\label{Decomposition Theorem in Hilbert spaces sec}

Let $\mathcal{H}$ be a Hilbert space and let $V_0,...,V_n\subseteq \mathcal{H}$ be closed subspaces.

\begin{definition} 
\label{d1}
For a set $\tau \subseteq \lbrace 0,...,n \rbrace$ define  subspace $\mathcal{H}_\tau$ as
$$\mathcal{H}_\tau = \begin{cases}
\bigcap_{i \in \lbrace 0,...,n \rbrace \setminus \tau} V_i & \tau \neq \lbrace 0,...,n \rbrace \\
\mathcal{H} & \tau = \lbrace 0,...,n \rbrace
\end{cases},$$
\end{definition}
e.g., $\mathcal{H}_{\lbrace 0,...,n-1 \rbrace} = V_n$ and $\mathcal{H}_{\lbrace 0,...,n-2 \rbrace} = V_{n-1} \cap V_n$.
  
Note that for two sets $\tau, \eta \subseteq \lbrace 0,...,n \rbrace$, $\mathcal{H}_{\eta \cap \tau} = \mathcal{H}_\eta \cap \mathcal{H}_\tau$ and in particular if $\eta \subseteq \tau$, then $\mathcal{H}_\eta \subseteq \mathcal{H}_\tau$. Also note that $\mathcal{H}_{\emptyset} = \bigcap_{i=0}^n V_i$.
\begin{definition}
\label{d2}
For a set $\tau \subseteq \lbrace 0,...,n \rbrace$ define a subspace  $\mathcal{H}^\tau$ as 
$$\mathcal{H}^\tau = \begin{cases}
\mathcal{H}_{\tau} \cap \left( \bigcap_{\eta \varsubsetneqq \tau} \mathcal{H}_\eta^\perp \right) & \tau \neq \emptyset \\
\mathcal{H}_{\emptyset} & \tau = \emptyset 
\end{cases}$$
\end{definition}
and note that 
$$\bigcap_{\eta \varsubsetneqq \tau} \mathcal{H}_\eta^\perp = \left( \sum_{\eta \varsubsetneqq \tau} \mathcal{H}_\eta \right)^\perp.$$
\begin{definition}
[Angle between subspaces]\cite[Definition 3.2, Remark 3.19]{Kassabov} Let $V_1$ and $V_2$ be two closed subspaces in a Hilbert space. The cosine of $\angle  (V_1,V_2)$ is defined as $0$ if $V_1\subseteq V_2 \; or \; V_2\subseteq V_1$ and otherwise as
$$
\cos \angle (V_1,V_2)=
\sup \{\vert \left\langle v_1,v_2\right\rangle\vert \; :\;\Vert v_i\Vert =1, v_i\in V_i, v_i\perp (V_1\cap V_2)\}.$$
\end{definition}

\begin{remark}
There is an alternative definition of $\cos \angle (V_1,V_2)$ in the language of projections: denote $P_{V_1}, P_{V_2}, P_{V_1 \cap V_2}$ to be the orthogonal projections on $V_1,V_2, V_1 \cap V_2$. Then 
$$\cos \angle (V_1,V_2) = \Vert P_{V_1} P_{V_2} - P_{V_1 \cap V_2} \Vert.$$
The proof of the equivalence between this definition and the one given above is straightforward and can be found in \cite[Lemma 9.5]{Deutsch}. 
\end{remark}

\begin{definition}
Let $V_0,...,V_n$ be closed subspaces in a Hilbert space.
The cosine matrix $A=A(V_0,...,V_n)$ of $V_0,...,V_n$ is defined as follows: $A$ is $(n+1) \times (n+1)$ matrix with
$$A_{i,j}=\begin{cases} 1 & i=j \\
-\cos \angle (V_i,V_j) & i\ne j \end{cases}.$$
\end{definition}

\begin{theorem}[Decomposition Theorem]
\label{Decomposition Thm}
Let $\mathcal{H}$ be a Hilbert space,  $V_0,...,V_n\subseteq \mathcal{H}$ be closed subspaces, and  $A$ be the cosine matrix of $V_0,...,V_n$. If $A$ is positive definite, then for every $\tau \subseteq \{0,1,...,n\}$, it holds that $\mathcal{H}_{\tau }=\bigoplus_{\eta\subseteq \tau }\mathcal{H}^{\eta }$.
\end{theorem}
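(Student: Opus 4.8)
The plan is to argue by induction on $|\tau|$, reducing the statement to a single quantitative estimate about the subspaces $\mathcal{H}^{\eta}$; that estimate is the only place where the hypothesis that $A$ is positive definite, together with Kassabov's results on angles between subspaces, is used. For $\tau=\emptyset$ the assertion is the definition $\mathcal{H}^{\emptyset}=\mathcal{H}_{\emptyset}$, and for $\tau=\{i\}$ it is the orthogonal splitting $\mathcal{H}_{\{i\}}=\mathcal{H}_{\emptyset}\oplus\mathcal{H}^{\{i\}}$, i.e. the orthocomplement of the closed subspace $\mathcal{H}_{\emptyset}$ inside $\mathcal{H}_{\{i\}}$. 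So fix $\tau$ with $|\tau|\geq 2$ and assume the theorem for every $\tau'\subsetneq\tau$.

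The formal part of the inductive step uses only the definitions. Since every proper subset of $\tau$ is contained in a maximal one $\tau\setminus\{i\}$, the induction hypothesis gives
\[
\sum_{\eta\subsetneq\tau}\mathcal{H}_{\eta}\;=\;\sum_{i\in\tau}\mathcal{H}_{\tau\setminus\{i\}}\;=\;\sum_{\eta\subsetneq\tau}\mathcal{H}^{\eta}
\]
as algebraic sums of subspaces, hence also after taking closures. Combining this with $\mathcal{H}^{\tau}=\mathcal{H}_{\tau}\cap\bigl(\sum_{\eta\subsetneq\tau}\mathcal{H}_{\eta}\bigr)^{\perp}$ and with the inclusion $\overline{\sum_{\eta\subsetneq\tau}\mathcal{H}_{\eta}}\subseteq\mathcal{H}_{\tau}$, one gets the orthogonal splitting $\mathcal{H}_{\tau}=\mathcal{H}^{\tau}\oplus\overline{\sum_{\eta\subsetneq\tau}\mathcal{H}^{\eta}}$. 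Therefore the theorem for $\tau$ is equivalent to the statement that the algebraic sum $\sum_{\eta\subsetneq\tau}\mathcal{H}^{\eta}$ is already closed and is an internal direct sum.

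I would then record the one orthogonality relation among the pieces that holds for free: if $\eta\subsetneq\zeta$ then $\mathcal{H}^{\zeta}\subseteq\mathcal{H}_{\eta}^{\perp}$ while $\mathcal{H}^{\eta}\subseteq\mathcal{H}_{\eta}$, so $\mathcal{H}^{\eta}\perp\mathcal{H}^{\zeta}$ whenever $\eta,\zeta$ are comparable; and if $\eta,\zeta$ are incomparable then $\mathcal{H}^{\eta},\mathcal{H}^{\zeta}\subseteq\mathcal{H}_{\eta\cap\zeta}^{\perp}$ with $\mathcal{H}_{\eta\cap\zeta}=\mathcal{H}_{\eta}\cap\mathcal{H}_{\zeta}$, so $|\langle x_{\eta},x_{\zeta}\rangle|\leq\cos\angle(\mathcal{H}_{\eta},\mathcal{H}_{\zeta})\,\|x_{\eta}\|\,\|x_{\zeta}\|$ for $x_{\eta}\in\mathcal{H}^{\eta}$ and $x_{\zeta}\in\mathcal{H}^{\zeta}$. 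With these facts, the induction closes once one produces a constant $c>0$ depending only on $A$ with
\[
\Bigl\|\sum_{\eta\subsetneq\tau}x_{\eta}\Bigr\|^{2}\;\geq\;c\sum_{\eta\subsetneq\tau}\|x_{\eta}\|^{2}\qquad\text{for all }x_{\eta}\in\mathcal{H}^{\eta}:
\]
as there are only finitely many summands, this bound forces the sum to be direct, and, the $\mathcal{H}^{\eta}$ being closed, it makes the natural map from their (finite) Hilbert direct sum into $\mathcal{H}$ bounded below, hence with closed image equal to $\sum_{\eta\subsetneq\tau}\mathcal{H}^{\eta}$.

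The remaining lower bound is the technical heart, and I expect it to be the main obstacle. It cannot be obtained by a crude triangle-inequality estimate on the cross terms $\langle x_{\eta},x_{\zeta}\rangle$: the matrix one would get that way, indexed by the proper subsets of $\tau$ with $-\cos\angle(\mathcal{H}_{\eta},\mathcal{H}_{\zeta})$ off the diagonal, need not be positive definite even when $A$ is, because of cancellations between the various cross terms. One must instead exploit positive-definiteness of $A$ globally, through Kassabov's quantitative theory of angles between subspaces. Heuristically, $\mathcal{H}=\bigoplus_{\eta}\mathcal{H}^{\eta}$ is a deformation of the joint spectral decomposition one would obtain if the orthogonal projections $P_{V_{0}},\dots,P_{V_{n}}$ commuted, and a strictly positive smallest eigenvalue of $A$ is precisely what keeps this deformation uniformly non-degenerate. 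Concretely, I would combine Kassabov's estimates for the ``generalized angles'' of each $V_{i}$ against sub-families (equivalently, for norms of products of the projections $P_{V_{i}}$) with the induction hypothesis to bound $\|\sum_{\eta\subsetneq\tau}x_{\eta}\|$ from below, obtaining $c$ as an explicit function of the smallest eigenvalue of $A$; the delicate point, and the real obstacle, is to arrange the argument so that $c$ does not deteriorate as $|\tau|$ grows.
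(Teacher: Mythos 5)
Your formal reduction is fine and agrees with what the paper needs: granting an induction hypothesis, everything comes down to showing that the algebraic sum $\sum_{\eta\subsetneq\tau}\mathcal{H}^{\eta}$ is direct and closed, and you are also right that the naive estimate via the subset-indexed matrix with off-diagonal entries $-\cos\angle(\mathcal{H}_{\eta},\mathcal{H}_{\zeta})$ cannot work. But the proposal stops exactly where the theorem's content begins: the coercivity bound $\|\sum_{\eta}x_{\eta}\|^{2}\geq c\sum_{\eta}\|x_{\eta}\|^{2}$ is only announced, with a heuristic about deformations of a joint spectral decomposition and an unspecified appeal to Kassabov's ``generalized angles'', and you yourself flag its proof as ``the real obstacle''. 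That is a genuine gap, not a routine verification: without it neither directness nor closedness is established, so the theorem is not proved.

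The paper closes this gap with two concrete devices absent from your sketch. First, the induction is run on the number of subspaces rather than on $|\tau|$: the theorem is applied to the family $V_{0}\cap V_{n},\dots,V_{n-1}\cap V_{n}$ inside the Hilbert space $V_{n}$, and the step that makes this legitimate is Corollary \ref{A (V_0 cap V_n,...) coro} --- Kassabov's Lemma \ref{angle between intersections lemma} on angles between intersections, combined with the congruence $A=B^{t}\operatorname{diag}(A'',1)B$ of Lemma \ref{p.d. lemma}, shows that the cosine matrix of the intersected family is still positive definite with smallest eigenvalue at least $\mu$. This yields $V_{i}=\bigoplus_{\eta\subseteq\{0,\dots,n\}\setminus\{i\}}\mathcal{H}^{\eta}$ for every $i$. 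Second, given $\sum_{\eta}v_{\eta}=0$, the paper does not estimate the $2^{n+1}$ components against one another; it groups them into only $n+1$ vectors $u_{0},\dots,u_{n}$ (sorting each $v_{\eta}$ essentially by the largest index missing from $\eta$), each $u_{i}$ lying in $V_{i}$, so that the pairwise inner products are claimed to be controlled by the entries of the original $(n+1)\times(n+1)$ matrix $A$ and its quadratic form gives $0\geq\mu\sum_{i}\|u_{i}\|^{2}$; the decomposition of $V_{i}$ from the first step then kills each individual $v_{\eta}$. This grouping, together with the stability of positive definiteness under intersection, is exactly what prevents the constant from deteriorating --- the issue you correctly identify as the crux --- and your proposal contains no substitute for it. (Be warned that even here the cross-term control is delicate: verifying that each $u_{i}$ is suitably orthogonal to the relevant intersections $V_{i}\cap V_{j}$ is not a formal consequence of the definitions for every summand of $u_{i}$, so any write-up must treat this point with care rather than by assertion.)
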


\begin{remark}
The original proof of this Theorem published in \cite{GRO} contained a mistake which is corrected in this version of the paper.
\end{remark}

The proof of this Theorem will require some set-up. We will start with defining an order relation between matrices that will be useful later on:
\begin{definition}
Let $A,B$ be two square matrices of the same dimensions. We denote $A \preceq B$ if for every $i,j$, $A_{i,j} \leq B_{i,j}$.
\end{definition} 

The reason to define this order relation is the following:
\begin{proposition}
\label{smallest e.v. proposition}
Let $A_1,A_2$ be two square matrices of the same dimension such that they both have $1$'s along the main diagonal and all their other entries are non-positive. Denote $\mu_i$ to be the smallest eigenvalues of $A_i$ for $i=1,2$. If $A_1 \preceq A_2$, then $\mu_1 \leq \mu_2$.
\end{proposition}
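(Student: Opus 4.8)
The plan is to reduce the statement to the well-known Perron--Frobenius-type monotonicity for the \emph{largest} eigenvalue of entrywise-nonnegative symmetric matrices. Write $A_i = I - B_i$, where $B_i$ has zero diagonal and nonnegative entries (this uses the hypothesis that the diagonal entries are $1$ and off-diagonal entries are nonpositive). Then the smallest eigenvalue $\mu_i$ of $A_i$ equals $1 - \nu_i$, where $\nu_i$ is the largest eigenvalue of $B_i$. The condition $A_1 \preceq A_2$ translates into $B_1 \succeq B_2$ entrywise, i.e.\ $0 \le (B_2)_{k,\ell} \le (B_1)_{k,\ell}$ for all $k,\ell$. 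So it suffices to show $\nu_2 \le \nu_1$, i.e.\ that the largest eigenvalue of a symmetric matrix with nonnegative entries is monotone under entrywise domination; then $\mu_1 = 1-\nu_1 \le 1-\nu_2 = \mu_2$.

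For the core monotonicity claim I would use the variational (Rayleigh quotient) characterization together with a sign-flip trick to handle the fact that $B_2$ need not dominate $B_1$ in the Loewner order. Let $x$ be a unit eigenvector of $B_1$ for its largest eigenvalue $\nu_1$. Since $B_1$ has nonnegative entries, by Perron--Frobenius we may take $x$ to have nonnegative coordinates, $x \ge 0$ (strictly speaking one applies this to an irreducible block, or just notes that $\nu_1 = \max_{\|y\|=1} y^{\mathsf T} B_1 y$ is attained at some $y$ and then replaces $y$ by $|y|$, which only increases $y^{\mathsf T} B_1 y$ because the entries of $B_1$ are nonnegative). Then
\[
\nu_1 = x^{\mathsf T} B_1 x = \sum_{k,\ell} (B_1)_{k,\ell}\, x_k x_\ell \ge \sum_{k,\ell} (B_2)_{k,\ell}\, x_k x_\ell = x^{\mathsf T} B_2 x \le \nu_2,
\]
where the middle inequality is termwise, valid because $x_k x_\ell \ge 0$ and $0 \le (B_2)_{k,\ell} \le (B_1)_{k,\ell}$, and the last inequality is the Rayleigh bound $\nu_2 = \max_{\|y\|=1} y^{\mathsf T} B_2 y \ge x^{\mathsf T} B_2 x$. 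Hence $\nu_1 \ge \nu_2$, which is what we wanted.

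The main obstacle — and the reason the nonnegativity of the off-diagonal entries is essential — is that entrywise domination is strictly weaker than the Loewner order, so one cannot simply say $A_1 \preceq A_2 \Rightarrow A_1 \le A_2$ as quadratic forms and invoke min-max directly. The resolution is precisely the passage to nonnegative eigenvectors via Perron--Frobenius: once the optimizing vector can be taken coordinatewise nonnegative, entrywise domination of the matrices \emph{does} give domination of the relevant quadratic form evaluated at that single vector, which is all the Rayleigh quotient argument needs. One small point to be careful about is reducibility of $B_1$ (when the associated graph is disconnected): either argue blockwise, or — cleaner — avoid Perron--Frobenius entirely and just use that for any real symmetric $M$, $\lambda_{\max}(M) = \max_{\|y\|=1} y^{\mathsf T} M y$, pick a maximizer $y$ for $B_1$, and observe $|y|^{\mathsf T} B_1 |y| \ge y^{\mathsf T} B_1 y = \nu_1$ since $(B_1)_{k,\ell}\ge 0$, so $|y|$ is also a maximizer with nonnegative coordinates; then run the display above with $x = |y|$.
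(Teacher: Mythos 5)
Your overall strategy is exactly the paper's: write $A_i = I - B_i$ with $B_i$ entrywise nonnegative, reduce the smallest eigenvalue of $A_i$ to the largest eigenvalue $\nu_i$ of $B_i$, and use the Perron--Frobenius/absolute-value observation that the Rayleigh quotient of a nonnegative symmetric matrix is maximized at a coordinatewise nonnegative vector, so that entrywise domination can be compared through the quadratic form. The reduction $\mu_i = 1-\nu_i$ and the translation $A_1 \preceq A_2 \iff B_1 \succeq B_2$ entrywise are both correct, as is your remark that entrywise domination is weaker than the Loewner order and that the nonnegative maximizer is what saves the argument.

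However, the final chain of inequalities does not prove what you claim. You take $x$ to be the nonnegative maximizer for $B_1$ and write $\nu_1 = x^{\mathsf T}B_1 x \ge x^{\mathsf T}B_2 x \le \nu_2$, then conclude $\nu_1 \ge \nu_2$. From $\nu_1 \ge x^{\mathsf T}B_2x$ and $x^{\mathsf T}B_2x \le \nu_2$ nothing follows: both $\nu_1$ and $\nu_2$ are merely bounded below by the same quantity. The inequalities must be chained in the other direction, which means you must start from the \emph{entrywise smaller} matrix $B_2$. Let $x\ge 0$, $\Vert x\Vert=1$, be a maximizer of the Rayleigh quotient of $B_2$ (obtained by your $|y|$ trick). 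Then
\begin{equation*}
\nu_2 = x^{\mathsf T}B_2 x = \sum_{k,\ell}(B_2)_{k,\ell}x_k x_\ell \le \sum_{k,\ell}(B_1)_{k,\ell}x_k x_\ell = x^{\mathsf T}B_1 x \le \max_{\Vert y\Vert =1} y^{\mathsf T}B_1 y = \nu_1,
\end{equation*}
where the first inequality is termwise (using $x_kx_\ell\ge 0$ and $(B_2)_{k,\ell}\le (B_1)_{k,\ell}$) and the second is the Rayleigh bound for $B_1$. This gives $\nu_2\le\nu_1$, hence $\mu_1 = 1-\nu_1 \le 1-\nu_2 = \mu_2$. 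With this one-line correction the proof is complete and coincides with the paper's argument.
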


\begin{proof}
Note that $I-A_1, I-A_2$ are both non-negative matrices and as such, by the Perron-Frobenius Theorem, their largest eigenvalues are achieved by a vectors with non-negative entries. Thus, for $i=1,2$,
$$\mu_i = \max \lbrace \overline{v}^t A_i \overline{v} : \Vert \overline{v} \Vert =1, \overline{v} \text{ has non-negative entries} \rbrace.$$
The assumption $A_1 \preceq A_2$ implies that for every vector $\overline{v}$ with non-negative entries $\overline{v}^t A_1 \overline{v} \leq \overline{v}^t A_2 \overline{v}$ and therefore $\mu_1 \leq \mu_2$.
\end{proof}

The following result is proved in \cite{Kassabov}:
\begin{lemma}\cite[Lemma 4.2]{Kassabov}
\label{angle between intersections lemma}
The angles 
$\angle (V_1\cap V_3 , V_2\cap V_3)$
satisfy the inequality:
$\cos\angle(V_1\cap V_3 , V_2\cap V_3) \leq \frac{\lambda_{12}+\lambda_{13}\lambda_{23}}{\sqrt{1-\lambda^2_{13}}\sqrt{1-\lambda^2_{23}}},$ where $\lambda_{ij}=\cos \angle (V_i,V_j).$
\end{lemma}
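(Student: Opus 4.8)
The plan is to unwind the definition of the angle and reduce to a statement about a single pair of unit vectors. By the definition of $\cos\angle$, the asserted inequality is equivalent to: for all unit vectors $v_1\in V_1\cap V_3$ and $v_2\in V_2\cap V_3$ with $v_1,v_2\perp V_1\cap V_2\cap V_3$, one has $|\langle v_1,v_2\rangle|\le\frac{\lambda_{12}+\lambda_{13}\lambda_{23}}{\sqrt{1-\lambda_{13}^2}\sqrt{1-\lambda_{23}^2}}$ (if one of $V_1\cap V_3,V_2\cap V_3$ contains the other there is nothing to prove). First I would reduce to the case $V_1\cap V_2\cap V_3=\{0\}$: since $V_1\cap V_2\cap V_3\subseteq V_i\cap V_j$ for all $i,j$, replacing each $V_i$ by $V_i\ominus(V_1\cap V_2\cap V_3)$ does not change any of the four cosines appearing in the statement (a direct check from the definition, using $(V_i\ominus N)\cap(V_j\ominus N)=(V_i\cap V_j)\ominus N$), and it removes the perpendicularity constraints. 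So it suffices to bound $|\langle v_1,v_2\rangle|$ for unit $v_1\in V_1\cap V_3$, $v_2\in V_2\cap V_3$ assuming $V_1\cap V_2\cap V_3=\{0\}$.

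The basic tool is the following consequence of the projection description $\cos\angle(U,W)=\|P_UP_W-P_{U\cap W}\|$ (the form recorded in the Remark after the definition of the angle): for closed subspaces $U,W$ and any $z\in W$, the vector $P_Uz$ splits \emph{orthogonally} as $P_Uz=P_{U\cap W}z+R_{U,W}z$, where $R_{U,W}z=(P_UP_W-P_{U\cap W})z$ lies in $U\cap(U\cap W)^\perp$ and $\|R_{U,W}z\|\le\cos\angle(U,W)\|z\|$; indeed $P_{U\cap W}(P_UP_W-P_{U\cap W})=0$, so the two summands are orthogonal and $\|P_Uz\|^2=\|P_{U\cap W}z\|^2+\|R_{U,W}z\|^2$. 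Applying this with $z=v_1$, which lies in $V_1$ and in $V_3$: with $U=V_2,W=V_1$ it gives $P_{V_2}v_1=P_{V_1\cap V_2}v_1\oplus R$ with $\|R\|\le\lambda_{12}$, and with $U=V_2,W=V_3$ it gives $P_{V_2}v_1=P_{V_2\cap V_3}v_1\oplus R'$ with $\|R'\|\le\lambda_{23}$; symmetrically for $v_2$, while the versions with $W=V_3$ and $U=V_1$ or $U=V_2$ control $P_{V_1\cap V_3}$ and $P_{V_2\cap V_3}$ by honest orthogonal projections up to errors governed by $\lambda_{13}$ and $\lambda_{23}$.

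With these in hand I would run the estimate as follows. Write $v_i=p_i+q_i$ with $p_i=P_{V_1\cap V_2}v_i$ and $q_i\perp V_1\cap V_2$, so that $\langle v_1,v_2\rangle=\langle p_1,p_2\rangle+\langle q_1,q_2\rangle$ with $|\langle q_1,q_2\rangle|\le\lambda_{12}\|q_1\|\|q_2\|$ and $\|p_i\|^2+\|q_i\|^2=1$. Each $p_i$ lies in $V_1\cap V_2$ (hence in $V_1$ and in $V_2$) and satisfies $\mathrm{dist}(p_i,V_3)=\|(I-P_{V_3})q_i\|\le\|q_i\|$ (because $p_i=v_i-q_i$ and $v_i\in V_3$); feeding this into the decomposition tool with respect to $V_3$ and to $V_1\cap V_3,\,V_2\cap V_3$ yields a bound for $|\langle p_1,p_2\rangle|$ in terms of $\lambda_{13},\lambda_{23}$ and $\|q_1\|,\|q_2\|$. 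Assembling the pieces and using the Pythagorean identities $\|p_i\|^2=1-\|q_i\|^2$, the bound on $|\langle v_1,v_2\rangle|$ becomes an explicit function of $\lambda_{12},\lambda_{13},\lambda_{23}$ and of the norms of the orthogonal components produced along the way, and an elementary optimization over those norms yields exactly $\frac{\lambda_{12}+\lambda_{13}\lambda_{23}}{\sqrt{1-\lambda_{13}^2}\sqrt{1-\lambda_{23}^2}}$, the denominator emerging as the value at the maximizing choice.

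The hard part is that the naive triangle-inequality bounds for $\langle p_1,p_2\rangle$ are \emph{circular}: estimating $\|p_i\|$ or $\langle p_1,p_2\rangle$ directly reintroduces either $\cos\angle(V_1\cap V_3,V_2\cap V_3)$ itself or the auxiliary cosine $\cos\angle(V_1\cap V_2,V_3)$, which is not controlled by $\lambda_{12},\lambda_{13},\lambda_{23}$ alone. So the estimate must be organized so that these recursive terms cancel or get absorbed — in effect one derives a self-consistent inequality $M\le F(M;\lambda_{12},\lambda_{13},\lambda_{23})$ for $M=\cos\angle(V_1\cap V_3,V_2\cap V_3)$ and solves for $M$. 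What the answer must be, and how to steer the bookkeeping, is dictated by the finite-dimensional model: for three half-spaces of a sphere in general position the four cosines obey the spherical-trigonometry identity $\lambda_{12}=\big|\lambda_{13}\lambda_{23}\pm\sqrt{1-\lambda_{13}^2}\sqrt{1-\lambda_{23}^2}\,\cos\angle(V_1\cap V_3,V_2\cap V_3)\big|$, which (comparing the two sign choices and using $|a\pm b|\le|a|+|b|$) rearranges to precisely the stated bound; the general Hilbert-space statement is the inequality form of this identity.
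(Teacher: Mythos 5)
First, a remark on the comparison itself: the paper does not prove this lemma at all --- it is quoted verbatim from Kassabov \cite[Lemma 4.2]{Kassabov} and used as a black box --- so the only question is whether your argument stands on its own. It does not yet. The preparatory material is fine: the reduction to $V_1\cap V_2\cap V_3=\{0\}$ is correct, and so is the ``basic tool'' (for $z\in W$, $P_Uz=P_{U\cap W}z+R$ orthogonally with $\|R\|\le\cos\angle(U,W)\|z\|$), which is indeed the kind of one-sided projection estimate the real proof runs on. But the decisive step is missing. In the third paragraph you claim that $|\langle p_1,p_2\rangle|$ (with $p_i=P_{V_1\cap V_2}v_i$) can be bounded ``in terms of $\lambda_{13},\lambda_{23}$ and $\|q_1\|,\|q_2\|$'' via $\operatorname{dist}(p_i,V_3)\le\|q_i\|$; that is not true as stated. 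Two nearly-unit vectors of $V_1\cap V_2$, each close to $V_3$, can have inner product close to $1$ even when $\lambda_{13},\lambda_{23}$ are small: closeness of $p_i$ to $V_3$ together with smallness of $\lambda_{13},\lambda_{23}$ only forces $p_1$ to be close to $V_1\cap V_3$ and $p_2$ to be close to $V_2\cap V_3$, so any bound on $\langle p_1,p_2\rangle$ necessarily reintroduces the unknown $M=\cos\angle(V_1\cap V_3,V_2\cap V_3)$ itself. You then acknowledge exactly this circularity in the last paragraph and say one should derive a self-consistent inequality $M\le F(M;\lambda_{12},\lambda_{13},\lambda_{23})$ and solve it --- but you never write down such an inequality, never show it can be arranged with a coefficient of $M$ strictly less than $1$, and never perform the ``elementary optimization'' that is supposed to produce the denominators $\sqrt{1-\lambda_{13}^2}\sqrt{1-\lambda_{23}^2}$. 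That derivation is precisely the content of Kassabov's lemma, so the proposal currently proves nothing beyond restating the target.

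The closing appeal to the spherical law of cosines is legitimate motivation (it explains why the bound has this shape and why it is sharp in the three-dimensional model), but it is not a proof in a general Hilbert space. To make the argument complete you would need to carry out the absorption scheme concretely, for instance: take unit $u_1\in V_1\cap V_3$, $u_2\in V_2\cap V_3$ orthogonal to $V_1\cap V_2\cap V_3$, split $u_2=P_{V_1\cap V_2}u_2+d$ and further split $P_{V_1\cap V_2}u_2$ against $V_2\cap V_3$; using the one-sided version of the angle definition (one may require only one of the two vectors to be orthogonal to the intersection), the term involving $d$ is controlled by $\lambda_{12}$, one of the remaining terms by $\lambda_{13}\lambda_{23}$, and the last one by $M$ times an explicitly bounded norm, after which solving the resulting linear inequality in $M$ gives the stated quotient. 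Until an estimate of this kind is actually executed, there is a genuine gap.
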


This Lemma motivates the following definition:

\begin{definition}
Let $V_0,...,V_n$ be closed subspaces in a Hilbert space. Denote ${A'}$ be the $n\times n$ 
matrix defined as follows: 
$${A'}_{i,j} = \begin{cases}
1 & i =j \\
-\delta_{ij} & i \neq j
\end{cases}$$
where $\delta_{ij}=\frac{\lambda_{ij}+\lambda_{in}\lambda_{jn}}{\sqrt{1-\lambda_{in}^2}\sqrt{1-\lambda_{jn}^2}},$ and $\lambda_{ij}=\cos \angle (V_i,V_j)$.
\end{definition}

Additionally, we will need the following Lemma:
\begin{lemma}
\label{p.d. lemma}
For $V_0,...,V_n$, $A$, $A'$ be as above, let $\mu$ be the smallest eigenvalue of $A$ and $\mu'$ be the smallest eigenvalue of $A'$. If $A$ is positive definite, then $\mu \leq \mu'$. In particular, if $A$ is positive definite, then $A'$ is also positive definite. 
\end{lemma}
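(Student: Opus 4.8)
The plan is to realize $A'$ as a diagonally rescaled Schur complement of $A$, and then to combine a standard eigenvalue inequality for Schur complements of positive definite matrices with an elementary estimate for congruence by a diagonal matrix with entries $\geq 1$. Write $A$ in block form as $A = \begin{pmatrix} B & c \\ c^{t} & 1\end{pmatrix}$, where $B$ is the $n\times n$ principal submatrix of $A$ on the indices $\{0,\dots,n-1\}$ and $c = -(\lambda_{0n},\dots,\lambda_{n-1,n})^{t}$. Since $A$ is positive definite, every $2\times2$ principal minor $(\begin{smallmatrix} 1 & -\lambda_{in} \\ -\lambda_{in} & 1\end{smallmatrix})$ is positive definite, so $1-\lambda_{in}^{2}>0$ for all $i$; hence the diagonal matrix $D = \operatorname{diag}\big((1-\lambda_{0n}^{2})^{-1/2},\dots,(1-\lambda_{n-1,n}^{2})^{-1/2}\big)$ is well defined and has all diagonal entries $\geq 1$. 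A direct computation shows that the Schur complement $S := B - c\,c^{t}$ of $A$ (eliminating the last coordinate) has entries $S_{ii} = 1-\lambda_{in}^{2}$ and $S_{ij} = -(\lambda_{ij}+\lambda_{in}\lambda_{jn})$ for $i\neq j$, and therefore $A' = DSD$.

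Next I would use the identity $v^{t} S v = \min_{w}\, \binom{v}{w}^{t} A \binom{v}{w}$, the minimum being over the last coordinate $w$. Since $A$ is positive definite with smallest eigenvalue $\mu>0$, we have $\binom{v}{w}^{t} A \binom{v}{w} \geq \mu\big(\|v\|^{2}+|w|^{2}\big) \geq \mu\|v\|^{2}$ for every $w$, hence $v^{t} S v \geq \mu\|v\|^{2}$ for all $v$. In particular $S$ is positive definite and its smallest eigenvalue $\mu_{S}$ satisfies $\mu \leq \mu_{S}$. Being congruent to the positive definite matrix $S$ via $D$, the matrix $A' = DSD$ is then positive definite as well, which already establishes the last assertion of the lemma.

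It remains to show $\mu_{S} \leq \mu'$. For a unit vector $v$, set $w = Dv$; then $v^{t}A'v = w^{t}Sw \geq \mu_{S}\|w\|^{2} \geq \mu_{S}$, where the last inequality uses $\|w\|^{2} = v^{t}D^{2}v \geq \|v\|^{2} = 1$ (because $D^{2}$ has diagonal entries $\geq 1$) together with $\mu_{S}>0$. Taking the infimum over unit vectors $v$ gives $\mu' \geq \mu_{S}$, and combining with the previous paragraph yields $\mu \leq \mu_{S} \leq \mu'$, as desired.

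I do not anticipate a genuine obstacle: the statement is essentially a matrix reformulation of Lemma \ref{angle between intersections lemma}. The only points that need a little care are the bookkeeping in the Schur-complement computation — checking that conjugation by $D$ turns $S$ into exactly $A'$ — and keeping track of where positive definiteness of $A$ is used, namely to make $D$ well defined and to supply both the inequality $\mu\leq\mu_{S}$ and the positivity $\mu_{S}>0$ needed in the final step.
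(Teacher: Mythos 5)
Your proof is correct and follows essentially the same route as the paper: your Schur complement $S$ is exactly the paper's auxiliary matrix $A''$, your identity $v^{t}Sv=\min_{w}\binom{v}{w}^{t}A\binom{v}{w}$ is the variational form of the paper's factorization $A=B^{t}\operatorname{diag}(A'',1)B$ applied to the vector $\overline{v}=B^{-1}\binom{\overline{u}}{0}$, and the final congruence step $A'=DSD$ with $D_{ii}\geq 1$ is the same. If anything, your write-up is slightly more careful than the paper's in spelling out why $1-\lambda_{in}^{2}>0$ and why the smallest eigenvalue does not decrease under the congruence by $D$.
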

\begin{proof}
Let $A''$ be the $n\times n$ matrix defined by 
$$A''_{i,j}=\begin{cases} 1-\lambda^2_{in} & i=j \\
-\lambda_{ij}-\lambda_{in} \lambda_{jn} & i\ne j
\end{cases},$$ 
where $\lambda_{ij}=\cos \angle (V_i,V_j).$
As observed in the proof of \cite[Theorem 5.1 (a)]{Kassabov}, the matrices $A'$ and $A''$ has the following relation $A'=DA''D$
where $D$ is a diagonal matrix with entries $D_{i,i} = \frac{1}{\sqrt{1-\lambda^2_{in}}}.$ 
Thus, it is enough to prove that if $\alpha$ is the smallest positive eigenvalue of  $A''$ then $\mu \leq \alpha .$

Let
$\overline{u}=(u_0\; u_1\; .\; .\; .\; u_{n-1})^t ,\; \Vert \overline{u}\Vert =1,$
be an eigenvector with the eigenvalue $\alpha$. Denote $B=\begin{pmatrix}
Id&0\\-\lambda_n^t&1
\end{pmatrix}$
where $\lambda_n = (\lambda_{0 n} \; .\; .\; .\; \lambda_{n-1 n})^t$. Further denote
$\overline{v}=B^{-1}\begin{pmatrix}
\overline{u}\\0
\end{pmatrix}.$
By the definition of $\overline{v},$
$$\overline{v}=B^{-1}\begin{pmatrix}
\overline{u}\\0
\end{pmatrix}=\begin{pmatrix}
\overline{u}\\ \lambda_{0 n}u_0+..+\lambda_{n-1 n}u_{n-1}
\end{pmatrix}.$$
Thus,
\begin{equation}
\label{eq:1}
\Vert \overline{v}\Vert ^2=\Vert \overline{u}\Vert ^2+\vert \lambda_{0n}u_0+..+\lambda_{n-1 n}u_{n-1}\vert ^2\geq \Vert \overline{u}\Vert ^2=1 .
\end{equation}

As observed in the proof of  \cite[Theorem 5.1]{Kassabov}, $A$ can be written as the product:
$$A=\begin{pmatrix}
Id_{n\times n}&-\lambda_n\\0&1
\end{pmatrix}\begin{pmatrix}
A''&0\\0&1
\end{pmatrix}\begin{pmatrix}
Id_{n\times n}&0\\ -\lambda^t_n &1
\end{pmatrix}.$$ 
Hence,
\begin{align*}
\mu\Vert \overline{v}\Vert ^2 \leq \langle A\overline{v},\overline{v}\rangle =\left \langle B^t\begin{pmatrix}
A''&0\\0&1
\end{pmatrix}B\overline{v},\overline{v}\right \rangle=\left \langle \begin{pmatrix}
A''&0\\0&1
\end{pmatrix}B\overline{v},B\overline{v}\right \rangle\\=
\left\langle \begin{pmatrix}
A''&0\\0&1
\end{pmatrix}\begin{pmatrix}
\overline{u}\\0
\end{pmatrix},\begin{pmatrix}
\overline{u}\\0
\end{pmatrix}\right\rangle=\alpha \Vert \overline{u}\Vert ^2=\alpha.
\end{align*}
Together with
 \eqref{eq:1},
it follows that
$\mu \leq \frac{1}{\Vert \overline{v}\Vert ^2}\alpha \leq \alpha$ as needed.
\end{proof}

\begin{corollary}
\label{A (V_0 cap V_n,...) coro}
Let $V_0,...,V_n, \mathcal{H}$ be as above. If $A(V_0,...,V_n)$ is positive definite and the smallest eigenvalue of $A(V_0,...,V_n)$ is greater or equal to $\mu$, then $A(V_0 \cap V_n,...,V_{n-1} \cap V_n)$ is positive definite and the smallest eigenvalue of $A(V_0 \cap V_n,...,V_{n-1} \cap V_n)$ is greater or equal to $\mu$.
\end{corollary}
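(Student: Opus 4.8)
The plan is to identify the cosine matrix $A(V_0\cap V_n,\ldots,V_{n-1}\cap V_n)$ with something comparable to the auxiliary matrix $A'$ introduced above, and then chain together Lemma \ref{angle between intersections lemma}, Proposition \ref{smallest e.v. proposition} and Lemma \ref{p.d. lemma}.

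First I would set $W_i = V_i\cap V_n$ for $0\le i\le n-1$ and examine the $n\times n$ matrix $A(W_0,\ldots,W_{n-1})$. Its off-diagonal entries are $-\cos\angle(W_i,W_j)$, and Lemma \ref{angle between intersections lemma} gives $\cos\angle(W_i,W_j)\le\delta_{ij}$, where $\delta_{ij}=\frac{\lambda_{ij}+\lambda_{in}\lambda_{jn}}{\sqrt{1-\lambda_{in}^2}\sqrt{1-\lambda_{jn}^2}}$ is exactly the quantity used to define $A'$. Hence $A'_{i,j}=-\delta_{ij}\le -\cos\angle(W_i,W_j)=A(W_0,\ldots,W_{n-1})_{i,j}$ for $i\ne j$, while both matrices carry $1$'s on the diagonal; that is, $A'\preceq A(W_0,\ldots,W_{n-1})$. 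Before doing this I would record the easy observation that positive definiteness of $A=A(V_0,\ldots,V_n)$ forces $\lambda_{in}=\cos\angle(V_i,V_n)<1$ for every $i$ (otherwise the $2\times2$ principal minor on rows and columns $\{i,n\}$, which equals $1-\lambda_{in}^2$, degenerates), so the denominators $\sqrt{1-\lambda_{in}^2}$ are positive, each $\delta_{ij}\ge0$, and both matrices above fall into the setting of Proposition \ref{smallest e.v. proposition}: $1$'s on the diagonal and non-positive off-diagonal entries.

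Next I would combine the pieces. Applying Proposition \ref{smallest e.v. proposition} to $A'\preceq A(W_0,\ldots,W_{n-1})$ shows that the smallest eigenvalue of $A'$ is at most the smallest eigenvalue of $A(W_0,\ldots,W_{n-1})$. On the other hand, Lemma \ref{p.d. lemma} tells us that, since $A$ is positive definite, $A'$ is positive definite and its smallest eigenvalue $\mu'$ satisfies $\mu'\ge\mu_A$, where $\mu_A$ denotes the smallest eigenvalue of $A$; by hypothesis $\mu_A\ge\mu$. Concatenating, the smallest eigenvalue of $A(V_0\cap V_n,\ldots,V_{n-1}\cap V_n)$ is $\ge\mu'\ge\mu_A\ge\mu$, and it is in particular $\ge\mu_A>0$, which yields positive definiteness.

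I do not expect a genuine obstacle here: the statement is essentially a bookkeeping assembly of the three preceding results. The only points that need a little care are (i) checking that the denominators $\sqrt{1-\lambda_{in}^2}$ do not vanish, so that $A'$ and the $\delta_{ij}$ are well defined and $\delta_{ij}\ge0$, and (ii) verifying that the inequalities for smallest eigenvalues in Proposition \ref{smallest e.v. proposition} and Lemma \ref{p.d. lemma} point in compatible directions so that the chain closes cleanly, which they do.
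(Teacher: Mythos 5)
Your proposal is correct and follows essentially the same route as the paper: establish $A'\preceq A(V_0\cap V_n,\ldots,V_{n-1}\cap V_n)$ via Lemma \ref{angle between intersections lemma}, then combine Lemma \ref{p.d. lemma} and Proposition \ref{smallest e.v. proposition}. Your extra checks (non-vanishing denominators, sign of the off-diagonal entries) are sound and merely make explicit what the paper leaves implicit.
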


\begin{proof}
By Lemma \ref{angle between intersections lemma}, $A' \preceq A(V_0 \cap V_n,...,V_{n-1} \cap V_n)$ and the corollary follows from Lemma \ref{p.d. lemma} and Proposition \ref{smallest e.v. proposition} above.
\end{proof}

\begin{lemma}
\label{direct sum}
Let $\hh$ be a Hilbert space, $V_0,...,V_n\subseteq \hh$ be a closed subspaces, and $A$ be a cosine matrix of $V_0,...,V_n.$
Define 
\begin{align*}
&W_0=V_0, \\
 &W_1=V_1\cap (V_1\cap V_0)^{\perp },\\
 &.\\&.\\&.&\\ 
 &W_n=V_n\cap(\cap _{j<n}(V_n\cap Vj)^{\perp }) .
\end{align*}
If $A$ is positive definite, then $V_0+...+V_n=W_0\oplus...\oplus W_n$
\end{lemma}

\begin{proof}
Throughout this proof,  for a subspace $U\subseteq \hh$,  we denote $P_{U}$ to be the orthogonal projection on $U$. 

First,  we will prove that $V_0+...+V_n=W_0+...+W_n$ by induction on $n$.

For $n=0,$ it holds by the definition $W_0=V_0$.  Let $n > 0$ and assume that the equality holds for $n-1$.  We note that $W_0+...+W_n\subseteq V_0+...+V_n$ thus it remains to prove that for every $v\in V_0+...V_n$ it holds that $v\in W_0+...+W_n$.  Let $v\in V_0+...+V_n$, $v'\in V_0+...+V_{n-1}$ and $v''\in V_n$ such that $v=v'+v''$.  

Denote 
$$u'=v'+P_{\sum_{j<n}V_n\cap V_j}v'',$$ 
$$u''=P_{\left (\sum_{j<n}V_n\cap V_j\right )^{\perp }}v''$$ 
and note that $v=u'+u''$.  Observe that 
$$\sum_{j<n}V_n\cap V_j\subseteq V_0+...+V_{n-1}$$ 
and therefore $u'\in V_0+...+V_{n-1}=W_0+...+W_{n-1}$.  In order to finish this part of the proof,  we will show that $u'' \in W_n$.  Recall that 
$$\left (\sum_{j<n}V_n\cap V_j\right )^{\perp }=\bigcap _{j<n} \left (V_n \cap V_j \right )^{\perp }$$
and thus 
$$u'' \in \left(\bigcap _{j<n}(V_n\cap V_j)^{\perp }\right)$$
and in order to show that $u'' \in W_n$,  we are left to show that $u'' \in V_n$.  Note that 
$$\left (\sum_{j<n}V_n\cap V_j\right)\subseteq V_n$$ 
and $v''\in V_n$,  thus follows that
$$u''=P_{\left( \sum_{j<n}V_n\cap V_j\right)^{\perp }}v''=v''-P_{\sum_{j<n}V_n\cap V_j}v''\in V_n,$$
as needed.

Second, we will prove that $W_0+...+W_n=W_0\oplus...\oplus W_n$.
Let $w_0\in W_0,...,w_n\in W_n$ such that $w_0+...+w_n=0$.  For every $0\leq j<i\leq n$,  note that $w_i \in V_i$ and $v_j \in V_j \cap (V_i \cap V_j)^\perp$ and thus
$$\vert \langle w_j,w_i\rangle \vert \leq \cos \angle(V_j,V_i)\Vert w_j\Vert \Vert w_i\Vert .$$ 

Let $\mu$ be the smallest eigenvalue of $A$, then
\begin{align*}
&0=\Vert w_0+...+w_n\Vert ^2\geq \sum_{j=0}^n\Vert w_i\Vert ^2-2\sum_{0\leq j<i\leq n}\vert \langle w_j,w_i\rangle \vert\geq \\
&\sum_{j=0}^n\Vert w_i\Vert ^2-\sum_{0\leq j<i\leq n}2\cos \angle (V_j,V_i)\Vert w_j\Vert w_i \Vert =\\
&\begin{pmatrix}
\Vert w_0 \Vert &.&.&.&\Vert w_n\Vert
\end{pmatrix} A \begin{pmatrix}
\Vert w_0 \Vert &.&.&.&\Vert w_n\Vert
\end{pmatrix}^t\geq \mu \left (\sum _{j=0}^n\Vert w_j \Vert ^2\right ).
\end{align*}
Therefore,  for every $0\leq j\leq n$ it holds that $w_j=0$  as needed.
\end{proof}

\begin{corollary}
\label{part of d.sum}
Under the assumptions and the notation above, it holds that for every $j$ that
$$V_0+...+V_j=\left (V_0+...+V_{j-1}\right )\oplus W_j.$$
\end{corollary}

After this set-up,  we can prove the Decompositiom Theorem (Theorem \ref{Decomposition Thm} above):

\begin{proof} 
Let $\mathcal{H}_\tau, \mathcal{H}^\tau, \tau \subseteq \lbrace 0,...,n \rbrace$ be as above. 

It stems from the definition that for every $\tau \subseteq \{0,...,n\}$,  it holds that $\hh _{\tau }=\sum_{\eta \subseteq \tau }\hh ^{\eta }$ and we are left to prove this is a direct sum.  Also, without loss of generality, it is enough to prove that $\hh =\oplus _{\eta \subseteq \{0,...,n\}}\hh ^{\eta }.$ We will prove this decomposition by induction on $n$. 

For $n=0$, it holds by definition that $\hh ^{\emptyset }=\hh _{\emptyset }=V_0$ and $\hh _{\{0\}}=\hh$.  Thus, $\hh ^{\{0\}}=\hh \cap V_0^{\perp }=V_0^{\perp }$ and obviously $\hh =V_0\oplus V_0^{\perp }$.

Assume that $n>0$ and that the decomposition holds for $n-1$ subspaces.  Let $V_0,...,V_n$ be subspaces of a Hilbert space $\hh$ such that $A(V_0,...,V_n)$ is positive definite.  Fix $\{v_{\eta}\in \hh ^{\eta }\}_{\eta \subseteq \{0,...,n\}}$ such that $\sum_{\eta \subseteq \{0,...,n\}}v_{\eta }=0$.  We will show that $v_{\eta }=0$ for every $\eta$. 

First, by definition $v_{\{0,...,n\}}, v_{\emptyset }\perp v_{\eta }$ for every $\eta \subseteq \{0,...,n\}$ and therefore $v_{\{0,..,n\}}=v_{\emptyset}=0.$ Next, we will show that $v_{\eta }=0$ by downwards induction on $\vert \eta \vert$.

For $\vert \eta \vert =n$, without loss of generality,  it is enough to prove that $v_{\{0,...,n-1\}}=0$.  
Note that $v_{\{0,...,n-1\}}\in \hh^{\{0,...,n-1\}}=W_n$,  where $W_{i}$ is  defined as in  Lemma \ref{direct sum}, and 
$$\sum_{\substack{\emptyset \ne \eta \subseteq \{0,...,n\}\\ \eta \ne\{0,...,n\}, \{0,...,n-1\}}}v_{\eta }\in V_0+...+V_{n-1}.$$ 
By  Corollary \ref{part of d.sum}, 
$$V_0+...+V_n=\left (V_0+...+V_{n-1}\right )\oplus W_n$$ 
and thus, $v_{\{0,...,n-1\}}=0$.

Fix $1 \leq k \leq n-1$ and assume that for every $\vert \eta \vert \geq k+1,$ we have that $v_{\eta }=0.$ We will prove for every $\eta$ with $\vert \eta \vert=k$ it holds that $v_{\eta }=0$. Without loss of generality,  it is enough to prove that $v_{\{0,...,k-1\}}=0.$
Note that $v_{\{0,...,k-1\}}\in V_k,$ and let 
$$u=\sum_{\substack{\emptyset \ne \eta \subseteq \{0,...,n\}\\\vert \eta \vert \leq k, \eta \ne \{0,...,k-1\}}}v_{\eta}.$$

Observe that for every $\emptyset \ne \eta \subseteq \{0,...,n\}$ such that $\vert \eta \vert \leq k$ and $\eta \ne \{0,...,k-1\}$,  it holds that 
$$v_\eta \in V_0+...+V_{k-1}.$$
Thus $u \in V_0+...+V_{k-1}$ and also
\begin{align*}
&u+P_{\left(\sum_{j<k}V_j\cap V_k \right)} v_{\{0,...,k-1\}}\in V_0+...+V_{k-1}.
\end{align*}
Also observe that
 $$P_{\left (\sum_{j<k}V_j\cap V_k\right )^{\perp }}v_{\{0,...,k-1\}}\in W_k.$$  
 Therefore it holds that 
 $$\left(u + P_{\left(\sum_{j<k}V_j\cap V_k \right)} v_{\{0,...,k-1\}} \right) + _{\left (\sum_{j<k}V_j\cap V_k\right )^{\perp }}v_{\{0,...,k-1\}}$$
and by Corollary \ref{part of d.sum}, 
$$P_{\left (\sum_{j<k}V_j\cap V_k\right )^{\perp }}v_{\{0,...,k-1\}}=0.$$ 

Thus,
$$v_{\{0,...,k-1\}}-P_{\sum_{j<k}V_j\cap V_k}v_{\{0,...,k-1\}}=0$$ 
and 
$$v_{\{0,...,k-1\}}=P_{\sum_{j<k}V_j\cap V_k}v_{\{0,...,k-1\}}.$$
Define the following subspaces: 
$$V'_0=V_0\cap V_k,...,\widehat{V_k\cap V_k},...,V'_{n-1}= V_n\cap V_k$$ and 
$\hh '=V'_0+...+V'_{n-1}.$ 

By the induction assumption and Corollary \ref{A (V_0 cap V_n,...) coro},  
$$\hh'=\bigoplus_{\eta \subseteq \{0,...,n-1\}}\left (\hh '\right )^{\eta }.$$
Note that,
$$\im\left (P_{\sum_{j<k}V_j\cap V_k}\right )\subseteq V'_0+...+V'_{k-1}=\bigoplus_{\eta \subseteq \{0,...,n-1\},  \lbrace 0,..., k-1 \rbrace \nsubseteq \eta }\left (\hh '\right )^{\eta },$$
and,
\begin{align*}
v_{\{0,...,k-1\}}\in \left( \bigcap_{n \geq j \geq k} V_k \right) \cap \bigcap_{0 \leq i \leq k-1} \left( \left(\bigcap_{n \geq j \geq k} V_k \right) \cap V_i  \right)^{\perp} = \\
\left(\bigcap _{n \geq j>k}\left (V_j\cap V_k\right ) \right) \cap \bigcap_{0 \leq i \leq k-1} \left( \left(\bigcap_{n \geq j > k} V_j \cap V_k \right) \cap (V_i \cap V_k)  \right)^{\perp} = \\
\left(\bigcap _{n-1 \geq j>k-1}  V_j' \right) \cap \bigcap_{0 \leq i \leq k-1} \left( \left(\bigcap_{n-1 \geq j > k-1} V_j ' \right) \cap (V_i ')  \right)^{\perp}  = \left( \hh ' \right)^{\lbrace 0,...,k-1 \rbrace}. 
\end{align*}

Thus, $v_{\{0,...,k-1\}}$ and $P_{\sum_{j<k}V_j\cap V_k} v_{\{0,...,k-1\}}$ are in a different components of the decomposition of $\hh' $ as a direct sum and since they are equal,  it follows that they are both $0$.  Thus $v_{\{0,...,k-1\}}=0$ as needed.
\end{proof}

\section{Vanishing of Cohomology for groups acting on simplicial complexes}
\label{Vanishing of Cohomology for groups acting on simplicial complexes sec}

The aim of this section is to prove Theorem \ref{General vanishing coho thm} that appeared in the introduction and gave a criterion for cohomology vanishing for groups acting on simplicial complexes (under the assumptions $(\mathcal{B} 1)- (\mathcal{B} 4)$).

Let $n \geq 2$, $X$ be a pure $n$-dimensional, $(n+1)$-partite simplicial complex with sides $S_0,...,S_n$ and  $G$ be a closed subgroup of $\Aut(X)$ with respect to the compact-open topology. Assume that $(X,G)$ fulfill $(\mathcal{B} 1)- (\mathcal{B} 4)$ and fix $\triangle \in X(n)$. For a simplex $\tau \subseteq \triangle$, we denote $G_\tau$ to be the subgroup of $G$ stabilizing $\tau$ (and use the convention $G_\emptyset = G$). For $0 \leq i \leq n$, let $\sigma_i \subseteq \triangle$ be the $n-1$ dimensional face of $\triangle$ such that $\type (\sigma_i) = \lbrace 0,...,\hat{i},...,n \rbrace$. 

Given a continuous unitary representation $\pi$ of $G$ on a Hilbert space $\mathcal{H}$, define $V_i = V_i (\pi)$ as
$$V_i = \mathcal{H}^{\pi (G_{\sigma_i})} = \lbrace v \in \mathcal{H} : \forall g \in G_{\sigma_i}, \pi (g).v =v \rbrace.$$
Also for every $\tau \subseteq \triangle$ define
$$\mathcal{H}_\tau = \mathcal{H}_{\type (\tau)}, \mathcal{H}^\tau = \mathcal{H}^{\type (\tau)},$$
where $\mathcal{H}_{\type (\tau)}, \mathcal{H}^{\type (\tau)}$ are defined as in Definitions \ref{d1}, \ref{d2} above.

The following definitions of the core complex and the Davis Chamber appear in the paper of Dymara and Januszkiewicz \cite{Dymara}. The inspiration to in is attributed in  \cite{Dymara} to a similar construction of M.W. Davis in the setting of Coxeter complexes.
\begin{definition}\cite[Definition 1.3]{Dymara}
Let $X$ be a simplicial complex. Take the first barycentric subdivision $X'$ of $X.$ The core $X_D$ of $X$ is the subcomplex of $X',$ consisting of the simplices, spanned by barycenters of simplices of $X$ with compact links.
\end{definition}

\begin{definition}\cite[Definition 1.5]{Dymara}
Assume that $(X,G)$ fulfill $(\mathcal{B} 1)-(\mathcal{B} 4)$. Let $\triangle$ be a chamber of $X$ and let $\triangle '$ be the first barycentric subdivision of $\triangle$. The \textit{Davis chamber} $D$ is the subcomplex of $\triangle '$ consisting of simplices whose vertices are barycenters of simplices of $\triangle$ with finite links in $X$.

For any $\sigma \subset \Delta ,$ denote by $\Delta _{\sigma }$ the union of faces of $\Delta $ not containing $\sigma ,$ and put $D_{\sigma }=D\cap \Delta _{\sigma }.$
\end{definition}

Dymara and Januszkiewicz proved the following condition for vanishing of  $H^*(G,\pi )$:
\begin{theorem}\cite[Theorems 5.1, 5.2]{Dymara}
\label{DJ thm}
Assume that $(X,G)$ fulfill $(\mathcal{B} 1)-(\mathcal{B} 4)$ and let $\triangle$ be a chamber in $X$. Given a continuous unitary representation $\pi$ of $G$ on a Hilbert space $\mathcal{H}$, if for every $\tau \subsetneq \triangle$, it holds that $\mathcal{H}_{\tau }=\bigoplus_{\eta\subseteq \tau }\mathcal{H}^{\eta }$, then:
\begin{enumerate}
\item 
$H^i (X, \pi) =0$ for every $1 \leq i \leq n-1$.
\item
If $1 \leq k \leq n-1$ is a constant such that all the $k$-dimensional links of $X$ are finite, then $H^i (G, \pi) = 0$ for every $1 \leq i \leq k$.
\end{enumerate}
\end{theorem}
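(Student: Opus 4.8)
The plan is to follow Garland and Dymara--Januszkiewicz: re-express the ($\ell^2$-type, $\pi$-twisted) cochain complex of $X$, and the complex computing the continuous group cohomology $H^*(G,\pi)$, over the \emph{finite} Davis chamber $D$, and then observe that the decomposition hypothesis $\mathcal{H}_\tau=\bigoplus_{\eta\subseteq\tau}\mathcal{H}^\eta$ is exactly what splits the resulting combinatorial complex into pieces whose cohomology is controlled by the (already known) connectivity of links. First I would reduce to the core: condition $(\mathcal{B}3)$ says that every simplex of $X$ whose link is infinite has contractible link, so these cells can be pushed off $G$-equivariantly and $X$ deformation retracts onto its core $X_D$; hence $H^*(X,\pi)\cong H^*(X_D,\pi)$. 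The core is the basic construction $X_D=U(G,D)$: it is tiled by the $G$-translates of the Davis chamber $D$, with $D$ a strict fundamental domain whose faces $D_\sigma$ are glued along the subgroups $G_\sigma$, $\sigma\subseteq\triangle$. Conditions $(\mathcal{B}2)$--$(\mathcal{B}3)$ also force $X_D$ to be $(n-1)$-connected (this is the ``trivial-coefficient'' content of the theorem, obtained by induction on dimension from the finiteness/contractibility of links), so in addition $H^i(G,\pi)\cong H^i_G(X_D,\pi)$ for $1\le i\le n-1$.

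\emph{The complex over $D$.} Using $(\mathcal{B}4)$ --- transitivity on chambers and type-preservation --- to collapse the $G$-action on cochains, $H^*(X_D,\pi)$ is computed by a complex $\mathbf{C}^\bullet$ whose degree-$p$ term is the Hilbert direct sum $\bigoplus_{s\in D(p)}\mathcal{H}_{\tau(s)}$, where $\tau(s)\subseteq\triangle$ is the face determined by which mirrors of $D$ contain the simplex $s$ (a \emph{proper} face as soon as $p\ge1$), and whose differential is the simplicial coboundary of $D$ twisted by the inclusions $\mathcal{H}_\tau\hookrightarrow\mathcal{H}_{\tau'}$ for $\tau\subseteq\tau'$. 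For $H^*_G(X_D,\pi)$ one gets, through the orbit spectral sequence $E_1^{p,q}=\bigoplus_{s\in D(p)}H^q(G_s,\pi)$, the analogous complex with each $\mathcal{H}_{\tau(s)}$ replaced by a bar complex for $H^*(G_s,\pi)$. Setting up this local-to-global identification is the Dymara--Januszkiewicz machinery, and it is here that all of $(\mathcal{B}1)$--$(\mathcal{B}4)$, as well as the fact that Definitions \ref{d1}--\ref{d2} work with \emph{closed} subspaces and their orthogonal projections, get used.

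\emph{Inserting the hypothesis, and finishing.} Over the poset of proper faces of $\triangle$ (with inclusion transition maps), the assignment $\tau\mapsto\mathcal{H}_\tau$ is, by the hypothesis, the orthogonal --- hence split --- direct sum, over faces $\eta$, of the elementary systems that equal $\mathcal{H}^\eta$ on the faces containing $\eta$ and $0$ elsewhere. Feeding this into $\mathbf{C}^\bullet$ breaks the complex into a direct sum of subcomplexes indexed by $\eta$, each of which is (a shift of) the relative cochain complex, with \emph{constant} coefficients $\mathcal{H}^\eta$, of a cone inside $D$ modulo the part of its boundary dictated by the mirror structure; the cohomology of such a complex is a shift of the reduced cohomology of a link and therefore, by $(\mathcal{B}2)$--$(\mathcal{B}3)$, vanishes in degrees $1,\dots,n-1$. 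Reassembling gives $H^i(X,\pi)=0$ for $1\le i\le n-1$, which is part (1). For part (2), finiteness of every $k$-dimensional link makes the stabilizers $G_s$ that contribute in total degree $\le k$ essentially compact, so that $H^q(G_s,\pi)=0$ for $q\ge1$ in that range; the orbit spectral sequence then degenerates onto the row $q=0$, which is precisely the complex $\mathbf{C}^\bullet$ analyzed above, and we obtain $H^i(G,\pi)=0$ for $1\le i\le k$.

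\emph{Main obstacle.} The hard part is the construction of $\mathbf{C}^\bullet$: identifying $H^*(X_D,\pi)$ and $H^*_G(X_D,\pi)$ with an explicit complex over $D$ requires careful handling of the mirror combinatorics of the Davis chamber, of orientation signs, and --- because $\mathcal{H}$ is infinite-dimensional --- of the functional-analytic bookkeeping of closed subspaces, orthogonal complements, and convergence of the sums and spectral sequences involved. Once $\mathbf{C}^\bullet$ is available, the remaining steps are comparatively soft: the hypothesis simply supplies an orthogonal direct-sum decomposition, and the vanishing then follows from the reduced cohomology of cones together with the connectivity already invoked in the reduction to the core.
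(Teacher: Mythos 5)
You should first be aware that the paper does not prove this statement: it is imported from Dymara--Januszkiewicz (their Theorems 5.1 and 5.2), and the only argument the paper supplies is the subsequent remark reconciling the definition of $\mathcal{H}_\tau$ via \cite[Proposition 4.1]{Dymara} and observing that the spectral hypothesis of \cite{Dymara} enters their proof only through the decomposition $\mathcal{H}_{\tau }=\bigoplus_{\eta\subseteq \tau }\mathcal{H}^{\eta }$. Your proposal instead sets out to reprove the cited theorem, but everything that actually constitutes the proof is deferred: the construction of $\mathbf{C}^\bullet$ over the chamber/Davis chamber and the identifications $H^*(X,\pi)\cong H^*(\mathbf{C}^\bullet)$ and $H^*(G,\pi)\cong H^*_G(X_D,\pi)$ are precisely the Dymara--Januszkiewicz machinery, and you explicitly treat them as a black box (``the hard part''). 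So as a stand-alone proof the proposal is incomplete, and as a citation it does not go beyond what the paper already does.

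Moreover, two of your bridging claims are wrong as stated, so the outline would not assemble into a proof even if the machinery were granted. (i) ``$(\mathcal{B}2)$--$(\mathcal{B}3)$ force $X_D$ to be $(n-1)$-connected, hence $H^i(G,\pi)\cong H^i_G(X_D,\pi)$ for $1\le i\le n-1$'': take the type-preserving quotient of the $\widetilde{A}_2$ Coxeter complex by a deep sublattice of translations, with the finite quotient of the affine Weyl group acting; $(\mathcal{B}1)$--$(\mathcal{B}4)$ hold, all links are finite, so $X_D=X'$ is a torus and in particular not simply connected. Identifying $H^*(G,\pi)$ with equivariant cohomology of the core needs contractibility of the core (this is where $(\mathcal{B}3)$ for $X$ itself and a nontrivial equivariant retraction enter) together with compactness of the relevant stabilizers, and this is exactly why part (2) carries the restricted range $i\le k$; it cannot be asserted unconditionally for all $i\le n-1$. (ii) You attribute the vanishing of the split pieces to ``the reduced cohomology of a link'', vanishing ``by $(\mathcal{B}2)$--$(\mathcal{B}3)$ in degrees $1,\dots,n-1$''. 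But $(\mathcal{B}2)$--$(\mathcal{B}3)$ give only gallery-connectedness and finite-or-contractible links; a finite $1$-dimensional link is a bipartite graph with large first cohomology, so no such vanishing of link cohomology is available, nor is it what is used. In the actual argument (compare the formula recalled as Theorem \ref{DJ thm2}) the pieces are $\widetilde{H}^{*-1}(D_{\sigma };\mathcal{H}^{\sigma })$, reduced cohomologies of the subcomplexes $D_\sigma$ of the Davis chamber (and, for part (1), of unions of faces of $\triangle$, which are cones), and their acyclicity in the stated range is a separate combinatorial lemma in which the finiteness of links enters only through which barycenters belong to $D$; your outline never addresses it. Note also that part (1) requires no passage to the core at all: by $(\mathcal{B}4)$ the $G$-equivariant cochain complex of $X$ already has $p$-th term $\bigoplus_{\sigma\subseteq\triangle,\ \dim\sigma=p}\mathcal{H}_{\type(\sigma)}$, to which the decomposition hypothesis applies directly.
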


Moreover, Dymara and Januszkiewicz also generalized their result and gave a formula for computation of the group cohomology:

\begin{theorem}\cite[Theorem 7.1]{Dymara}
\label{DJ thm2}
Assume that $(X,G)$ fulfill $(\mathcal{B} 1)-(\mathcal{B} 4)$ and let $\triangle$ be a chamber in $X$. Given a continuous unitary representation $\pi$ of $G$ on a Hilbert space $\mathcal{H}$, if for every $\tau \subsetneq \triangle$, it holds that $\mathcal{H}_{\tau }=\bigoplus_{\eta\subseteq \tau }\mathcal{H}^{\eta }$, then
$$H^* (G, \pi) = \bigoplus _{\sigma \subseteq \Delta }\widetilde{H}^{* -1}(D_{\sigma };\mathcal{H}^{\sigma }),$$
where $D_\sigma$ are the subcomplexes of the Davis chamber defined above. Moreover, these cohomology spaces are Hausdorff.
\end{theorem}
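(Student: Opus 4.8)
We only indicate the strategy, which is that of Dymara and Januszkiewicz \cite{Dymara}. The idea is to compute $H^*(G,\pi)$ as the $G$-equivariant cohomology of $X$, to run the isotropy spectral sequence of the action, and to use the decomposition hypothesis $\mathcal{H}_\tau=\bigoplus_{\eta\subseteq\tau}\mathcal{H}^\eta$ — together with its counterpart on every link, obtained inductively — to make that spectral sequence collapse onto the reduced cochain complexes of the pieces $D_\sigma$ of the Davis chamber. The whole argument proceeds by induction on $n=\dim X$.

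By $(\mathcal{B}3)$, either all links of $X$ are finite — in which case $X$ itself is finite, $G$ acts with open compact cell stabilizers, and the formula follows from a direct inclusion--exclusion over the faces of $\triangle$ — or $X$ is contractible, in which case $H^*(G,\pi)\cong H^*_G(X,\pi)$. In the contractible case, filtering $X$ by skeleta and using that the action is type preserving, cocompact and has $\triangle$ as a strict fundamental domain (each $G$-orbit of simplices meets $\triangle$ in exactly one face, since $\type$ is a bijection on the faces of $\triangle$) produces a spectral sequence
$$E_1^{p,q}=\bigoplus_{\substack{\sigma\subseteq\triangle\\ \dim\sigma=p}}H^q\bigl(G_\sigma,\pi|_{G_\sigma}\bigr)\ \Longrightarrow\ H^{p+q}(G,\pi),$$
whose first differential is the simplicial coboundary of $\triangle$ twisted by the restriction maps $H^q(G_\sigma,\pi)\to H^q(G_{\sigma'},\pi)$ for $\sigma\subset\sigma'$.

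To identify the coefficients $H^q(G_\sigma,\pi|_{G_\sigma})$ we invoke $(\mathcal{B}3)$ on the link $X_\sigma$. If $X_\sigma$ is finite, then $G_\sigma$ is (topologically) compact, so $H^q(G_\sigma,\pi|_{G_\sigma})=0$ for $q>0$ while $H^0(G_\sigma,\pi|_{G_\sigma})=\mathcal{H}^{\pi(G_\sigma)}$, which is exactly the intersection $\mathcal{H}_\sigma$ of the invariant subspaces $V_i$. If $X_\sigma$ is contractible, then $G_\sigma$ acts on $X_\sigma$ cocompactly and type-preservingly, $X_\sigma$ again satisfies $(\mathcal{B}1)$--$(\mathcal{B}4)$, and the decomposition hypothesis descends to the link: the intersections of the invariant subspaces of $G_\sigma$ coincide with the $\mathcal{H}_\eta$, so that $(\mathcal{H}')^\eta=\mathcal{H}^\eta$ — this is precisely the computation carried out in the proof of Theorem~\ref{Decomposition Thm}, and is the reason Corollary~\ref{A (V_0 cap V_n,...) coro} is phrased as it is. Hence, by the inductive hypothesis applied to $(X_\sigma,G_\sigma)$,
$$H^*(G_\sigma,\pi|_{G_\sigma})\ \cong\ \bigoplus_{\sigma'}\widetilde{H}^{\,*-1}\bigl(D^{X_\sigma}_{\sigma'};\mathcal{H}^{\sigma\cup\sigma'}\bigr),$$
where $\sigma'$ ranges over the faces of the chamber of $X_\sigma$ and $D^{X_\sigma}$ is its Davis chamber.

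Substituting this back, the decomposition $\mathcal{H}_\sigma=\bigoplus_{\eta\subseteq\sigma}\mathcal{H}^\eta$ splits every differential of the spectral sequence along the summands $\mathcal{H}^\eta$; after matching the cells of the barycentric subdivision of $\triangle$ with those of the Davis chamber, the $\mathcal{H}^\eta$-part of the total complex is exactly the reduced simplicial cochain complex $\widetilde{C}^{\,*-1}(D_\eta;\mathcal{H}^\eta)$ — the one-degree shift and the passage to reduced cohomology being the standard ones from Davis's computation of $H^*(W;\mathbb{Z}W)$, and the $\eta=\emptyset$ summand having $D_\emptyset=\emptyset$ and contributing $\mathcal{H}^\emptyset=\mathcal{H}^{\pi(G)}$ to $H^0$. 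As the hypothesis kills all cross-terms, the spectral sequence degenerates and yields $H^*(G,\pi)\cong\bigoplus_{\sigma\subseteq\triangle}\widetilde{H}^{\,*-1}(D_\sigma;\mathcal{H}^\sigma)$; Hausdorffness is then automatic, since each $\widetilde{C}^*(D_\sigma;\mathcal{H}^\sigma)=\widetilde{C}^*(D_\sigma)\otimes\mathcal{H}^\sigma$ is a finite complex with integer-matrix differentials, whose ranges are therefore closed. The main obstacle is exactly this last step: one must verify that the splitting $\mathcal{H}_\sigma=\bigoplus\mathcal{H}^\eta$ and its inductive analogues on all links are compatible not merely with the $E_1$-differential but with \emph{every} differential of the spectral sequence, and keep careful track — with the correct signs and degree shift — of which cell of the barycentric subdivision of $\triangle$ feeds which $D_\sigma$.
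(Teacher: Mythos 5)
This statement is not proved in the paper at all: it is quoted (with adapted notation) from Dymara--Januszkiewicz, and the only verification the paper itself supplies is the remark following Theorem \ref{DJ thm2}, which reconciles the definition of $\mathcal{H}_\tau$ used here (intersections of the $V_i$'s) with the one in \cite{Dymara} ($G_\tau$-invariants) via \cite[Proposition 4.1]{Dymara}. So the relevant comparison is between your sketch and the original proof in \cite{Dymara}, and as a proof your proposal has a genuine gap --- one you yourself flag. The entire content of \cite[Theorem 7.1]{Dymara} is precisely the step you defer: that the hypothesis $\mathcal{H}_{\tau}=\bigoplus_{\eta\subseteq\tau}\mathcal{H}^{\eta}$ (for proper $\tau$ only) makes the relevant equivariant cochain complex split, \emph{topologically}, into subcomplexes isomorphic to the reduced cochain complexes $\widetilde{C}^{*-1}(D_\sigma;\mathcal{H}^\sigma)$, with the correct degree shift, and that no higher differentials survive. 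Asserting that ``the spectral sequence degenerates'' and that the cells of $\triangle'$ ``match'' those of the Davis chamber is exactly the theorem, not a reduction of it; without carrying this out the sketch proves nothing beyond what Theorem \ref{DJ thm} already gives. The Hausdorffness claim inherits the same gap: it is automatic only \emph{after} one has a topological isomorphism of $H^*(G,\pi)$ with the cohomology of the finite complexes $\widetilde{C}^{*}(D_\sigma;\mathcal{H}^\sigma)$, which is the unestablished step.

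There are also specific points where the sketch would need repair rather than mere elaboration. The identification $H^*(G,\pi)\cong H^*_G(X,\pi)$ and the $E_1$-page you write down require justification in the setting of \emph{continuous} cohomology of a topological group acting with possibly non-compact stabilizers; this is exactly where $(\mathcal{B}3)$ and the core $X_D$ enter in \cite{Dymara}, whose argument is organized around the core and the Davis chamber rather than a naive skeletal isotropy spectral sequence, and your coefficient identification $H^q(G_\sigma,\pi|_{G_\sigma})$ silently uses the non-trivial fact (again \cite[Proposition 4.1]{Dymara}) that $\mathcal{H}^{\pi(G_\sigma)}=\bigcap_{i\notin\type(\sigma)}V_i$. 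The case ``all links finite'' is likewise not a ``direct inclusion--exclusion'': with $D=\triangle'$ the asserted formula places $\mathcal{H}^{\triangle}$ in degree $n$, so even this case carries real content and must be argued, not waved off. For the purposes of this paper none of this needs to be redone --- the citation of \cite[Theorem 7.1]{Dymara} together with the definitional remark is the intended justification --- but as a standalone proof your proposal stops exactly where the difficulty begins.
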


\begin{remark}
The Theorems stated above are not exactly formulated as it appears in \cite{Dymara}. First, in \cite{Dymara}, the subspaces $\mathcal{H}_\tau$ are defined a little differently, namely, for $\tau \subseteq \triangle$, $\mathcal{H}_\tau =  \mathcal{H}^{\pi (G_{\tau})}$. The discrepancy between the definitions is resolved by \cite[Proposition 4.1]{Dymara} that states that for every $\tau \subsetneq \triangle$, $G_\tau$ is generated by $\lbrace G_\sigma : \sigma \subseteq \triangle, \sigma \in X(n-1), \tau \subseteq \sigma \rbrace$ and thus for every $\tau \subsetneq \triangle$, 
$$\mathcal{H}_\tau = \bigcap_{\sigma \subseteq \triangle, \sigma \in X(n-1), \tau \subseteq \sigma} \mathcal{H}_\sigma.$$
Thus, 
$$\mathcal{H}_{\type (\tau)} = \bigcap_{\sigma \subseteq \triangle, \sigma \in X(n-1), \type (\tau) \subseteq \type (\sigma)} \mathcal{H}_\sigma
= \bigcap_{i \in \lbrace 0,...,n \rbrace \setminus \tau} V_i,$$
as needed.
Second, in \cite{Dymara}, the condition for Theorem \ref{DJ thm} stated above is given in as a bound the eigenvalues of the Laplacian on the $1$-dimensional links, but this bound is only used to prove the existence of the decomposition $\mathcal{H}_{\tau }=\bigoplus_{\eta\subseteq \tau }\mathcal{H}^{\eta }$ and \cite[Theorems 5.1, 5.2, 7.1]{Dymara} follows from that decomposition.
\end{remark}

\ignore{
\begin{theorem}\cite[Theorem 5.1,5.2]{Dymara}
Let $X,G,\triangle$ as above. Given a continuous unitary representation $\pi$ of $G$ on a Hilbert space $\mathcal{H}$, if for every $\tau \subsetneq \triangle$, it holds that $\mathcal{H}_{\tau }=\bigoplus_{\eta\subseteq \tau }\mathcal{H}^{\eta }$, then:
\begin{enumerate}
\item $H^k (X, \pi) =0$ for every $1 \leq k \leq n-1$.
\item If $1 \leq k \leq n-1$ is a constant such that all the $k$-dimensional links of $X$ are finite, then $H^i (G, \pi) = 0$ for every $1 \leq i \leq k$.
\end{enumerate} 
\end{theorem}
}
Combining Theorems \ref{DJ thm}, \ref{DJ thm2} with Theorem \ref{Decomposition Thm} leads to the following:
\begin{theorem}
\label{Vanishing condition for pi}
Let $X,G,\triangle$ as above. Given a continuous unitary representation $\pi$ of $G$ on a Hilbert space $\mathcal{H}$, if $A(V_0 (\pi),...,V_n (\pi))$ is positive definite, then:
\begin{enumerate}
\item
$H^i (X, \pi) =0$ for every $1 \leq i \leq n-1$.
\item
If $1 \leq k \leq n-1$ is a constant such that all the $k$-dimensional links of $X$ are finite, then $H^i (G, \pi) = 0$ for every $1 \leq i \leq k$.
\item
$ H^* (G, \pi) = \bigoplus _{\sigma \subseteq \Delta }\widetilde{H}^{* -1}(D_{\sigma };\mathcal{H}^{\sigma })$ and these cohomology spaces are Hausdorff.
\end{enumerate} 
\end{theorem}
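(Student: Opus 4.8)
The plan is to deduce Theorem~\ref{Vanishing condition for pi} by simply feeding the output of the Decomposition Theorem into the criterion of Dymara and Januszkiewicz. Concretely, fix a chamber $\triangle \in X(n)$ and a continuous unitary representation $\pi$ of $G$ on $\mathcal{H}$, and recall the subspaces $V_i = \mathcal{H}^{\pi(G_{\sigma_i})}$ attached to the codimension-one faces $\sigma_i \subseteq \triangle$, together with the associated subspaces $\mathcal{H}_\tau = \mathcal{H}_{\type(\tau)}$ and $\mathcal{H}^\tau = \mathcal{H}^{\type(\tau)}$ for $\tau \subseteq \triangle$. The hypothesis is precisely that $A = A(V_0(\pi),\dots,V_n(\pi))$, the cosine matrix of these $n+1$ closed subspaces, is positive definite.

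The first step is to invoke Theorem~\ref{Decomposition Thm} with the Hilbert space $\mathcal{H}$ and the subspaces $V_0(\pi),\dots,V_n(\pi)$: since $A$ is positive definite, we obtain, for every subset $S \subseteq \{0,\dots,n\}$, the orthogonal decomposition $\mathcal{H}_S = \bigoplus_{\eta \subseteq S} \mathcal{H}^\eta$. The second step is a translation of bookkeeping: because $G$ acts type-preservingly and transitively on $X(n)$ (property $(\mathcal{B}4)$), the type function restricts to a bijection between faces $\tau \subseteq \triangle$ and subsets $\type(\tau) \subseteq \{0,\dots,n\}$, and under this identification $\mathcal{H}_\tau$ and $\mathcal{H}^\tau$ as defined in this section agree exactly with $\mathcal{H}_{\type(\tau)}$ and $\mathcal{H}^{\type(\tau)}$ from Definitions~\ref{d1} and~\ref{d2}; the key point that $\mathcal{H}_\tau = \bigcap_{i \in \{0,\dots,n\}\setminus \type(\tau)} V_i$ matches the definition of $\mathcal{H}_{\type(\tau)}$ is already recorded in the remark following Theorem~\ref{DJ thm2} (via \cite[Proposition 4.1]{Dymara}). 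Hence the decomposition $\mathcal{H}_S = \bigoplus_{\eta \subseteq S}\mathcal{H}^\eta$ for $S = \type(\tau)$ reads exactly as $\mathcal{H}_\tau = \bigoplus_{\eta \subseteq \tau}\mathcal{H}^\eta$ for every $\tau \subsetneq \triangle$.

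The third and final step is to apply Theorem~\ref{DJ thm} and Theorem~\ref{DJ thm2}: their hypothesis --- that $\mathcal{H}_\tau = \bigoplus_{\eta \subseteq \tau}\mathcal{H}^\eta$ for every $\tau \subsetneq \triangle$ --- has just been verified, so conclusions (1) and (2) follow from Theorem~\ref{DJ thm}, and conclusion (3), including the Hausdorff property, follows from Theorem~\ref{DJ thm2}.

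There is essentially no obstacle here beyond the notational reconciliation in the second step; the whole content of the argument lives in Theorem~\ref{Decomposition Thm} and in the cited results of \cite{Dymara}. The one place to be slightly careful is to make sure the cosine matrix $A(V_0(\pi),\dots,V_n(\pi))$ appearing in the hypothesis is the \emph{same} object — built from the angles $\angle(V_i,V_j)$ between the fixed-point subspaces — that the Decomposition Theorem consumes; this is immediate from the definitions but worth stating explicitly so the reader is not confused with the cosine matrix $A(X)$ of Definition~\ref{The cosine matrix of X def}, whose relation to $A(V_0(\pi),\dots,V_n(\pi))$ is a separate matter taken up later.
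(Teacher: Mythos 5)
Your proposal is correct and follows exactly the paper's own argument: apply the Decomposition Theorem to the subspaces $V_0(\pi),\dots,V_n(\pi)$ to obtain $\mathcal{H}_\tau = \bigoplus_{\eta\subseteq\tau}\mathcal{H}^\eta$ for every $\tau\subsetneq\triangle$, then invoke the cited results of Dymara--Januszkiewicz. The only difference is that you spell out the notational identification between faces of $\triangle$ and subsets of $\{0,\dots,n\}$, which the paper handles in the remark preceding the theorem.
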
 

\begin{proof}
By Theorem \ref{Decomposition Thm}, if  $A(V_0 (\pi),...,V_n (\pi))$ is positive definite, then for every $\tau \subsetneq \triangle$, it holds that $\mathcal{H}_{\tau }=\bigoplus_{\eta\subseteq \tau }\mathcal{H}^{\eta }$ and the assertions regarding vanishing of cohomology follow from Theorem \ref{DJ thm}.
\end{proof}

Thus, in order to prove vanishing of cohomology, it is enough to prove that for every unitary representation $\pi$, the matrix $A(V_0 (\pi),...,V_n (\pi))$ is positive definite. It was already observed in \cite{Dymara}, that the angles between $V_i (\pi)$'s can be bounded by the second largest eigenvalues of the simple random walk on the  $1$-dimensional links $X$: 
\begin{lemma}\cite[Lemma 4.6, step 1]{Dymara} (see also \cite[Theorem 1.7]{Oppenheim} and \cite[Corollary 4.20]{AveProjOpp})
\label{e.v. bounds cosine lemma}
Let $X,G,\triangle$ as above. For $0 \leq i,j \leq n$, where $i \neq j$, denote $\lambda_{i,j}$ to be the second largest eigenvalue on the simple random walk on $X_\tau$, where $\tau \in X(n-2)$ with $\type (\tau) = \lbrace 0,...,n \rbrace \setminus \lbrace i,j \rbrace$. Then for every unitary representation $\pi$ on a Hilbert space $\mathcal{H}$, if $V_i = V_i (\pi), V_j = V_j (\pi)$ are defined as above, then 
$\cos \angle (V_i, V_j) \leq \lambda_{i,j}$. 
\end{lemma}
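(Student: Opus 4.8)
The plan is to reduce everything to the finite $1$-dimensional link $Y:=X_\tau$ and there run the local spectral estimate of Ballmann--\'{S}wi\c{a}tkowski / \.{Z}uk type, adapted to equivariant Hilbert-space coefficients.

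\emph{Reduction to the link.} Put $H:=G_\tau$. Since the action of $G$ is type preserving and transitive on $X(n)$, and a chamber has a unique face of any given type, $H$ is transitive on the chambers containing $\tau$, hence on the edges of the bipartite graph $Y$, and therefore also on each of its two sides, which we label $S_i^Y$ (colour $i$) and $S_j^Y$ (colour $j$). By $(\mathcal{B}1)$, $Y$ is finite, and by $(\mathcal{B}2)$, $Y$ is connected. The faces $\sigma_i,\sigma_j\subseteq\triangle$ both contain $\tau$, and $G_{\sigma_i},G_{\sigma_j}$ are exactly the $H$-stabilisers of the two endpoints $p\in S_i^Y$, $q\in S_j^Y$ of the edge of $Y$ determined by $\triangle$; thus $V_j=\mathcal{H}^{\pi(H_p)}$, $V_i=\mathcal{H}^{\pi(H_q)}$, and $\mathcal{H}^{\pi(H)}\subseteq V_i\cap V_j$. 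If $V_i\subseteq V_j$ or $V_j\subseteq V_i$ there is nothing to prove (recall $\lambda_{i,j}\ge 0$), so assume otherwise.

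\emph{The equivariant test function.} Let $v\in V_j$ and $w\in V_i$ be unit vectors with $v,w\perp V_i\cap V_j$; after multiplying $v$ by a unimodular scalar we may assume $\langle v,w\rangle\ge 0$, and it suffices to prove $\langle v,w\rangle\le\lambda_{i,j}$. Define $\phi\colon Y(0)\to\mathcal{H}$ by $\phi(g.p)=\pi(g)v$ and $\phi(g.q)=\pi(g)w$ for $g\in H$; this is well defined since $v$ is $H_p$-invariant and $w$ is $H_q$-invariant, and it is $H$-equivariant. Regard $\phi$ as an element of $\ell^2(Y(0);\mathcal{H})$ endowed with the degree-weighted inner product $\langle\psi_1,\psi_2\rangle=\sum_u d(u)\langle\psi_1(u),\psi_2(u)\rangle_{\mathcal{H}}$, for which the random walk $M$ is self-adjoint with spectrum in $[-1,1]$ and with $1$-eigenspace the constant functions $\cong\mathcal{H}$; in particular $\langle M\psi,\psi\rangle\le\lambda_{i,j}\Vert\psi\Vert^2$ whenever $\psi$ is orthogonal to the constants.

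\emph{The two computations.} Writing $\langle M\phi,\phi\rangle$ as a sum over edges and using that every edge is $h.\{p,q\}$ for some $h\in H$, so that $\langle\phi(u),\phi(u')\rangle=\langle v,w\rangle$ on it, gives $\langle M\phi,\phi\rangle=(\sum_u d(u))\,\re\langle v,w\rangle=\Vert\phi\Vert^2\langle v,w\rangle$ (recall $\langle v,w\rangle\ge0$). On the other hand, $\phi$ is orthogonal to the constants: the partial sums $\sum_{u\in S_i^Y}\phi(u)$ and $\sum_{u\in S_j^Y}\phi(u)$ are each $\pi(H)$-invariant because $H$ permutes their index sets, so the average $c_0:=(\sum_u d(u))^{-1}\sum_u d(u)\phi(u)$ lies in $\mathcal{H}^{\pi(H)}\subseteq V_i\cap V_j$; but then $\langle\phi(g.p),c_0\rangle=\langle\pi(g)v,c_0\rangle=\langle v,\pi(g)^{-1}c_0\rangle=\langle v,c_0\rangle=0$ (and similarly at the $q$-vertices), whence $\Vert c_0\Vert^2=(\sum_u d(u))^{-1}\sum_u d(u)\langle\phi(u),c_0\rangle=0$. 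Therefore $\Vert\phi\Vert^2\langle v,w\rangle=\langle M\phi,\phi\rangle\le\lambda_{i,j}\Vert\phi\Vert^2$, i.e.\ $\langle v,w\rangle\le\lambda_{i,j}$; taking the supremum over all admissible $v,w$ yields $\cos\angle(V_i,V_j)\le\lambda_{i,j}$.

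\emph{Main obstacle.} The only non-formal point is the vanishing $c_0=0$: without it one obtains merely $\langle v,w\rangle\le\lambda_{i,j}+(1-\lambda_{i,j})\Vert c_0\Vert^2/\Vert\phi\Vert^2$, which is useless since $\lambda_{i,j}<1$. The remaining steps are bookkeeping --- identifying which sides of $Y$ correspond to $V_i$ and $V_j$, and extracting the transitivity, finiteness and connectedness of $Y$ from $(\mathcal{B}1),(\mathcal{B}2),(\mathcal{B}4)$. Alternatively one may just invoke \cite[Lemma 4.6, step 1]{Dymara} (see also \cite[Theorem 1.7]{Oppenheim}, \cite[Corollary 4.20]{AveProjOpp}), where this estimate is proved.
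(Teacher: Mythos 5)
Your proof is correct. The paper does not prove this lemma itself --- it simply cites \cite[Lemma 4.6, step 1]{Dymara} (and \cite{Oppenheim}, \cite{AveProjOpp}) --- and your argument is exactly the standard one given in those references: reduce to the finite, connected, edge-transitive link $X_\tau$, build the $G_\tau$-equivariant test function from $v$ and $w$, and use that its weighted mean lies in $\mathcal{H}^{\pi(G_\tau)} \subseteq V_i \cap V_j$ (hence vanishes against $v,w \perp V_i\cap V_j$) to place the test function orthogonal to the constants before applying the spectral bound.
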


\begin{corollary}
\label{A(X) geq A (V_0,...)}
If $A(X)$ is the cosine matrix of $X$ defined in Definition \ref{The cosine matrix of X def}, then for every unitary representation $\pi$, $A(V_0 (\pi),...,V_n (\pi)) \succeq A(X)$. In particular, if $A(X)$ is positive definite, then by Proposition \ref{smallest e.v. proposition}, for every unitary representation $\pi$, $A(V_0 (\pi),...,V_n (\pi))$ is positive definite.
\end{corollary}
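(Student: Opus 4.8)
The plan is to observe that this is an immediate consequence of the entrywise comparison furnished by Lemma \ref{e.v. bounds cosine lemma}. First I would fix a continuous unitary representation $\pi$ of $G$ on a Hilbert space $\mathcal{H}$ and write out the two matrices being compared: by Definition \ref{The cosine matrix of X def}, the $(i,j)$ entry of $A(X)$ equals $1$ when $i=j$ and $-\lambda_{i,j}$ when $i \neq j$, where $\lambda_{i,j}$ is the second largest eigenvalue of the simple random walk on $X_\tau$ for $\tau \in X(n-2)$ with $\type(\tau) = \lbrace 0,\dots,n \rbrace \setminus \lbrace i,j \rbrace$; on the other hand, the $(i,j)$ entry of $A(V_0(\pi),\dots,V_n(\pi))$ equals $1$ when $i=j$ and $-\cos\angle(V_i(\pi),V_j(\pi))$ when $i \neq j$. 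The diagonal entries agree. For $i \neq j$, Lemma \ref{e.v. bounds cosine lemma} gives $\cos\angle(V_i(\pi),V_j(\pi)) \leq \lambda_{i,j}$, hence $-\cos\angle(V_i(\pi),V_j(\pi)) \geq -\lambda_{i,j}$, which is precisely the inequality between the corresponding entries. This establishes $A(V_0(\pi),\dots,V_n(\pi)) \succeq A(X)$.

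For the ``in particular'' clause I would invoke Proposition \ref{smallest e.v. proposition}, after checking that its hypotheses hold for the pair $A(X) \preceq A(V_0(\pi),\dots,V_n(\pi))$. Both matrices have $1$'s along the main diagonal by construction. Their off-diagonal entries are non-positive: for $A(V_0(\pi),\dots,V_n(\pi))$ this is immediate since the cosine of an angle between subspaces lies in $[0,1]$; for $A(X)$ it is because each $\lambda_{i,j}$ is the second largest eigenvalue of the simple random walk on the link $X_\tau$ of an $(n-2)$-simplex, and this link is finite by $(\mathcal{B} 1)$, gallery connected (hence connected as a graph) by $(\mathcal{B} 2)$, and bipartite since $X$ is $(n+1)$-partite forces $X_\tau$ to be $2$-partite; therefore $\lambda_{i,j} \in [0,1)$ and $-\lambda_{i,j} \leq 0$. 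Thus Proposition \ref{smallest e.v. proposition} applies and yields that the smallest eigenvalue of $A(X)$ is at most the smallest eigenvalue of $A(V_0(\pi),\dots,V_n(\pi))$; if $A(X)$ is positive definite its smallest eigenvalue is strictly positive, hence so is that of $A(V_0(\pi),\dots,V_n(\pi))$, i.e., the latter is positive definite as well.

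I do not expect a genuine obstacle here: the substantive content is already packaged into Lemma \ref{e.v. bounds cosine lemma} (the bound on angles between fixed-point subspaces by second largest eigenvalues of codimension-two links, due to Dymara and Januszkiewicz) and into Proposition \ref{smallest e.v. proposition} (monotonicity of the smallest eigenvalue under $\preceq$ for matrices with unit diagonal and non-positive off-diagonal entries). The only point that warrants a line of justification is the sign condition on the off-diagonal entries of $A(X)$, which is exactly where the bipartiteness of the codimension-two links — guaranteed by $(n+1)$-partiteness — and their connectedness via $(\mathcal{B} 2)$ are used.
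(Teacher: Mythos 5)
Your proposal is correct and follows exactly the route the paper intends: the corollary is stated without a displayed proof precisely because it is the immediate combination of Lemma \ref{e.v. bounds cosine lemma} (entrywise comparison of the off-diagonal terms) with Proposition \ref{smallest e.v. proposition}. Your extra verification that the off-diagonal entries of $A(X)$ are non-positive — via finiteness, gallery connectedness, and bipartiteness of the codimension-two links — is a sound filling-in of a detail the paper leaves to the reader (it is recalled in the discussion of random walks preceding Definition \ref{The cosine matrix of X def}).
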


Using Corollary \ref{A(X) geq A (V_0,...)} and Theorems \ref{Decomposition Thm}, \ref{DJ thm}, \ref{DJ thm2}, we can prove the following more general form of Theorem \ref{General vanishing coho thm}:
\begin{theorem}
\label{general statement of general thm}
Let $n \geq 2$, $X$ be a pure $n$-dimensional, $(n+1)$-partite simplicial complex with sides $S_0,...,S_n$ and $G$ be a closed subgroup of $\Aut(X)$ with respect to the compact-open topology. If $(X,G)$ fulfill $(\mathcal{B} 1)- (\mathcal{B} 4)$ and the cosine matrix of $X$ is positive definite, then:
\begin{enumerate}
\item For every continuous unitary representation $\pi$ of $G$, $H^k (X, \pi) =0$ for every $1 \leq k \leq n-1$.
\item If $1 \leq k \leq n-1$ is a constant such that all the $k$-dimensional links of $X$ are finite, then $H^i (G, \pi) = 0$ for every $1 \leq i \leq k$ and every continuous unitary representation $\pi$ of $G$. 
\item
$ H^* (G, \pi) = \bigoplus _{\sigma \subseteq \Delta }\widetilde{H}^{* -1}(D_{\sigma };\mathcal{H}^{\sigma })$ and these cohomology spaces are Hausdorff.
\end{enumerate}
\end{theorem}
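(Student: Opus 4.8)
The plan is to deduce this theorem directly by concatenating the results established so far in this section with the theorems of Dymara and Januszkiewicz; in fact it amounts to little more than a restatement of Theorem \ref{Vanishing condition for pi} once one knows that positive definiteness of $A(X)$ forces positive definiteness of $A(V_0(\pi),\dots,V_n(\pi))$ for \emph{every} $\pi$. First I would fix a chamber $\triangle \in X(n)$ and an arbitrary continuous unitary representation $\pi$ of $G$ on a Hilbert space $\mathcal{H}$, and form the subspaces $V_i = V_i(\pi) = \mathcal{H}^{\pi(G_{\sigma_i})}$ attached to the codimension-one faces $\sigma_0,\dots,\sigma_n$ of $\triangle$, together with the associated subspaces $\mathcal{H}_\tau$, $\mathcal{H}^\tau$, exactly as in the setup preceding Theorem \ref{DJ thm}.

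The key step is the passage from $A(X)$ to $A(V_0,\dots,V_n)$. By Lemma \ref{e.v. bounds cosine lemma}, $\cos\angle(V_i,V_j) \le \lambda_{i,j}$ for all $i \ne j$, where $\lambda_{i,j}$ is the relevant second largest eigenvalue of a simple random walk on a $1$-dimensional link; hence $A(V_0,\dots,V_n) \succeq A(X)$ entrywise, which is exactly Corollary \ref{A(X) geq A (V_0,...)}. Since both matrices carry $1$'s on the diagonal and non-positive off-diagonal entries, Proposition \ref{smallest e.v. proposition} gives that the smallest eigenvalue of $A(V_0,\dots,V_n)$ is at least that of $A(X)$; in particular, if $A(X)$ is positive definite then so is $A(V_0,\dots,V_n)$.

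With $A(V_0,\dots,V_n)$ positive definite, the Decomposition Theorem (Theorem \ref{Decomposition Thm}) applies to $V_0,\dots,V_n$ and yields $\mathcal{H}_\tau = \bigoplus_{\eta \subseteq \tau}\mathcal{H}^\eta$ for every $\tau \subseteq \{0,\dots,n\}$, which, after translating through the type function, is precisely the hypothesis $\mathcal{H}_\tau = \bigoplus_{\eta \subseteq \tau}\mathcal{H}^\eta$ for all $\tau \subsetneq \triangle$ required by Theorems \ref{DJ thm} and \ref{DJ thm2}. Feeding this decomposition into those two theorems produces items (1), (2) and (3) of the statement for this particular $\pi$; since $\pi$ was arbitrary, the theorem follows. (Equivalently, one simply combines Corollary \ref{A(X) geq A (V_0,...)} with Theorem \ref{Vanishing condition for pi}.)

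I do not expect any genuine obstacle here: all the substantive work is contained in the Decomposition Theorem and in the cited Dymara--Januszkiewicz results. The only point deserving a sentence of care is the identification $\mathcal{H}_{\type(\tau)} = \bigcap_{i \notin \tau} V_i$ for $\tau \subsetneq \triangle$ — that is, that the two a priori different readings of $\mathcal{H}_\tau$ agree — which uses $(\mathcal{B} 4)$ together with the fact (recorded in the remark after Theorem \ref{DJ thm2}, via \cite[Proposition 4.1]{Dymara}) that $G_\tau$ is generated by the stabilizers of the codimension-one faces of $\triangle$ containing $\tau$.
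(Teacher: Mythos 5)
Your proposal is correct and follows essentially the same route as the paper: positive definiteness of $A(X)$ passes to $A(V_0(\pi),\dots,V_n(\pi))$ via Corollary \ref{A(X) geq A (V_0,...)} and Proposition \ref{smallest e.v. proposition}, the Decomposition Theorem then gives $\mathcal{H}_\tau = \bigoplus_{\eta\subseteq\tau}\mathcal{H}^\eta$, and Theorems \ref{DJ thm} and \ref{DJ thm2} yield the three assertions. Your explicit citation of Theorem \ref{DJ thm2} for item (3) and the remark about reconciling the two definitions of $\mathcal{H}_\tau$ are, if anything, slightly more careful than the paper's own one-paragraph proof.
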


\begin{proof}
Let $X,G,\triangle$ as above and let $\pi$ be some unitary representation of $G$ on a Hilbert space $\mathcal{H}$. Assume cosine matrix of $X$ defined in Definition \ref{The cosine matrix of X def} is positive definite. Thus, by Corollary \ref{A(X) geq A (V_0,...)}, the matrix $A(V_0 (\pi),...,V_n (\pi))$ is also positive definite and by Theorem \ref{Decomposition Thm}, for every $\tau \subsetneq \triangle$,
$\mathcal{H}_{\tau }=\bigoplus_{\eta\subseteq \tau }\mathcal{H}^{\eta }$. Thus the three assertions stated above follow directly by Theorem \ref{DJ thm}.
\end{proof}

\section{Vanishing of cohomology for groups acting on buildings}
\label{Vanishing of cohomology for groups acting on buildings sec}

The aim of this section is to prove Theorem \ref{vanishing coho for general building thm} that appeared in the introduction and show that it can be used to prove Theorem \ref{Vanishing for affine buildings} regarding vanishing of cohomologies for groups acting on affine buildings. We start by recalling some definitions regarding Coxeter systems:

\begin{definition}[Coxeter matrix, Coxeter system]
A Coxeter matrix $M=(m_{s,t})$ on a finite set $S$ is an $S\times S$ symmetric matrix with entries in $\mathbb{N} \cup \{\infty \}$ such that 
$$m_{s,t}=\begin{cases}
1&  s=t\\
\geq 2 & otherwise
\end{cases}$$
A Coxeter matrix $M$ defines a Coxeter system $(W,S)$, where $W= \langle S \vert \mathcal{R} \rangle$ is a group generated by $S$ with relations
$\mathcal{R}=\{(st)^{m_{st}} : s,t \in S\}$ ($m_{st} = \infty$ means that no relation of the form $(st)^m$ is imposed).
\end{definition}

\begin{remark}
A standard fact regarding Coxeter systems is that every Coxeter system acts by type preserving automorphisms on a partite, pure $(\vert S \vert -1)$-dimensional simplicial complex $\Sigma (W,S)$ called the Coxeter complex (see \cite[Chapter 3]{ABrownBook}), such that $(\Sigma (W,S), W)$ fulfill $(\mathcal{B} 2)-(\mathcal{B} 4)$ and if $m_{s,t} < \infty$ for every $s,t \in S$, then $(\Sigma (W,S), W)$ also fulfill $(\mathcal{B} 1)$.
\end{remark}

\begin{definition}
\label{Cosine coxter.M}
\cite[definition 6.8.11]{Davis}
The cosine matrix associated to a Coxeter matrix $M$ is the $S\times S$ matrix $C=(c_{ij})$ defined by $c_{ij}=-\cos(\frac{\pi}{m_{s_i,s_j}}).$
When $m_{ij}=\infty$ we define $c_{ij}=-1.$
\end{definition}

\begin{observation}
Assume that $(W,S)$ is a Coxeter system such that $m_{s,t} < \infty$ for every $s,t \in S$. Denote $S = \lbrace s_0,...,s_n \rbrace$ and abbreviate $m_{s_i,s_j} = m_{i,j}$. 
Then $C$ defined above is exactly the cosine matrix of $\Sigma (W,S)$ defined in Definition \ref{The cosine matrix of X def}. Indeed, by \cite[Corollary 3.20]{ABrownBook}, for every $0 \leq i, j \leq n$, $i \neq j$, the link of type $\lbrace i,j \rbrace$ is a $2 m_{i,j}$-gon and it is easy to verify that the second largest eigenvalue of the simple random walk on a $2 m_{i,j}$-gon is $\cos(\frac{\pi}{m_{i,j}})$ and thus for every $i,j$, $c_{i,j}=- \cos(\frac{\pi}{m_{i,j}})$ (for $i=j$, $m_{i,i} =1$ and therefore $c_{i,i} = 1$). 
\end{observation}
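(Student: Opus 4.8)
The plan is to verify the equality $C=A(\Sigma(W,S))$ entry by entry. First I would invoke the Remark preceding Definition~\ref{Cosine coxter.M}: since $m_{s,t}<\infty$ for all $s,t$, the pair $(\Sigma(W,S),W)$ satisfies $(\mathcal{B} 1)$--$(\mathcal{B} 4)$, and $\Sigma(W,S)$ is pure $n$-dimensional with $n=|S|-1$ (which we take to be $\ge 2$, as Definition~\ref{The cosine matrix of X def} requires) and $(n+1)$-partite; so $A(\Sigma(W,S))$ is defined. On the diagonal both matrices are $1$ by definition, so the task reduces to showing that for $i\ne j$ the quantity $\lambda_{i,j}$---the second largest eigenvalue of the simple random walk on $X_\tau$ for $\tau\in X(n-2)$ with $\type(\tau)=\{0,\dots,n\}\setminus\{i,j\}$---equals $\cos(\pi/m_{i,j})$, since then $A_{i,j}=-\lambda_{i,j}=-\cos(\pi/m_{i,j})=c_{i,j}$.

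Second, I would pin down the complex $X_\tau$. The simplex $\tau$ has cotype $\{s_i,s_j\}$, so its link in the Coxeter complex is the rank-$2$ residue determined by $\{s_i,s_j\}$; by \cite[Corollary~3.20]{ABrownBook} this link is (isomorphic to) the Coxeter complex of the standard parabolic $\langle s_i,s_j\rangle$, a dihedral group of order $2m_{i,j}$, whose Coxeter complex is the $2m_{i,j}$-gon. Thus $X_\tau$ is the cycle graph on $2m_{i,j}$ vertices, a connected bipartite $1$-dimensional complex, consistent with the purity/partiteness remarks and with the fact that the relevant second largest eigenvalue lies in $[0,1)$.

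Third, I would compute the spectrum of the simple random walk $M$ on the cycle $C_N$ with $N=2m_{i,j}$. Labelling vertices by $\mathbb{Z}/N$, $M$ acts by $M\phi(v)=\tfrac12\bigl(\phi(v-1)+\phi(v+1)\bigr)$, a circulant operator diagonalized by the characters of $\mathbb{Z}/N$ with eigenvalues $\cos(2\pi k/N)$, $k=0,\dots,N-1$. The value $1$ is attained only at $k=0$, and among the remaining values the maximum is at $k=1$ (and $k=N-1$), equal to $\cos(2\pi/N)=\cos(\pi/m_{i,j})$. Hence $\lambda_{i,j}=\cos(\pi/m_{i,j})$, so $A_{i,j}=c_{i,j}$ for all $i,j$, and $A=C$, completing the argument.

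I do not anticipate a genuine difficulty: the statement is essentially a routine identification once the links are understood. The only place demanding care is the Coxeter-theoretic bookkeeping---namely that the rank-$2$ residue of the correct cotype is a $2m_{i,j}$-gon (not an $m_{i,j}$-gon), which is precisely where the factor converting $\cos(2\pi/N)$ into $\cos(\pi/m_{i,j})$ originates; getting that factor of two right, together with matching the sign convention $c_{ij}=-\cos(\pi/m_{ij})$, is the main thing to watch.
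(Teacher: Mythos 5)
Your proposal is correct and follows the paper's own argument exactly: identify the link of cotype $\{i,j\}$ as the Coxeter complex of the dihedral parabolic $\langle s_i,s_j\rangle$, i.e.\ a $2m_{i,j}$-gon (the same citation to Abramenko--Brown), and then compute the second largest eigenvalue of the simple random walk on the $2m_{i,j}$-cycle to be $\cos(2\pi/2m_{i,j})=\cos(\pi/m_{i,j})$. Your explicit circulant diagonalization merely fills in what the paper leaves as ``easy to verify,'' so there is nothing to add.
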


\begin{lemma}
\label{bound on A in building lemma}
Let $X$ be a building of dimension $\geq 2$ and thickness $\geq q+1$ where $q \geq 2$ such that all the $1$-dimensional link of $X$ are compact (i.e., finite) and let $C$ be the cosine matrix of the Coxeter complex (i.e., the apartment) of the building $X$. Then for $A = A(X)$ the cosine matrix of $X$ it holds that
$$A \succeq 2\frac{\sqrt{q}}{q+1}C +(1-2\frac{\sqrt{q}}{q+1})I.$$
In particular, if we denote $\widetilde{\mu}$ to be the smallest eigenvalue of $C$, then $\widetilde{\mu} > 1 - \frac{q+1}{2 \sqrt{q}}$ implies that $A$ is positive definite.
\end{lemma}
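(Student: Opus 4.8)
The key is to bound each entry $A_{i,j}$ of the cosine matrix of $X$ from below by a convex combination of the corresponding entry of $C$ and of $I$. For $i = j$ both matrices have $1$ on the diagonal, so the claimed inequality is an equality there, and we only need to worry about the off-diagonal entries. Fix $i \neq j$ and let $\tau \in X(n-2)$ with $\type(\tau) = \{0,\dots,n\}\setminus\{i,j\}$. The link $X_\tau$ is a finite, pure $1$-dimensional, bipartite, gallery-connected simplicial complex — i.e. a finite bipartite graph — and by the building axioms it is the incidence graph of a generalized $m_{i,j}$-gon with parameters controlled by the thickness: every vertex has valency $\geq q+1$. So the plan is: (1) recall that $A_{i,j} = -\lambda_{i,j}$ where $\lambda_{i,j}$ is the second largest eigenvalue of the simple random walk on $X_\tau$; (2) recall that $c_{i,j} = -\cos(\pi/m_{i,j})$ is the second largest eigenvalue of the simple random walk on the corresponding apartment link, which is the $2m_{i,j}$-cycle (a generalized $m_{i,j}$-gon of thickness exactly $2$); (3) show that the second-eigenvalue bound for the random walk on a thick generalized polygon improves over the thin case by exactly the Alon–Boppana-type factor $2\sqrt q/(q+1)$, giving $\lambda_{i,j} \leq \frac{2\sqrt q}{q+1}\cos(\pi/m_{i,j}) = -\frac{2\sqrt q}{q+1}c_{i,j}$, hence $A_{i,j} = -\lambda_{i,j} \geq \frac{2\sqrt q}{q+1}c_{i,j} = \frac{2\sqrt q}{q+1}c_{i,j} + \bigl(1-\frac{2\sqrt q}{q+1}\bigr)\cdot 0$, which is the off-diagonal entry of $\frac{2\sqrt q}{q+1}C + (1-\frac{2\sqrt q}{q+1})I$.

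For step (3) I would use the known eigenvalue theory of generalized polygons (Feit–Higman / Feit–Higman-type spectral data, as in the treatment of biregular bipartite graphs): the adjacency eigenvalues of a generalized $m$-gon of order $(s,t)$ are $\pm\sqrt{st}$ together with eigenvalues of the form $\sqrt{st}$ times $2\cos(\text{something})$ scaled appropriately — more precisely, the nontrivial eigenvalues $\theta$ of the incidence graph satisfy $\theta^2 \in \{s + t + 2\sqrt{st}\cos(k\pi/m) : k\}$ for an $m$-gon, so after normalizing the random walk (dividing by the geometric mean of the valencies, or handling the biregular case via the square of the walk on one side) the second-largest random-walk eigenvalue is at most $\frac{2\sqrt{st}}{\sqrt{(s+1)(t+1)}}\cos(\pi/m) \leq \frac{2\sqrt q}{q+1}\cos(\pi/m)$ using $s,t \geq q$ and the monotonicity of $x\mapsto \sqrt x/(x+1)$. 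The thin case $s = t = 1$ recovers $\cos(\pi/m)$ exactly, which is consistent. I would state the needed spectral fact as a lemma about generalized polygons (citing Feit–Higman or a standard reference such as the relevant chapter of Brouwer–Haemers or Van Maldeghem) rather than reprove it.

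Once the entrywise bound $A \succeq \frac{2\sqrt q}{q+1}C + (1-\frac{2\sqrt q}{q+1})I$ is established, the "in particular" clause is immediate: the matrix $B := \frac{2\sqrt q}{q+1}C + (1-\frac{2\sqrt q}{q+1})I$ has $1$'s on the diagonal (since $C$ does) and nonpositive off-diagonal entries (since $C$ does and $\frac{2\sqrt q}{q+1} > 0$), as does $A$; so Proposition \ref{smallest e.v. proposition} applies and gives $\mu(A) \geq \mu(B)$, where $\mu(\cdot)$ denotes the smallest eigenvalue. The smallest eigenvalue of $B$ is $\frac{2\sqrt q}{q+1}\widetilde\mu + (1 - \frac{2\sqrt q}{q+1})$, which is $>0$ precisely when $\widetilde\mu > 1 - \frac{q+1}{2\sqrt q}$ (a one-line rearrangement, using $\frac{q+1}{2\sqrt q} \geq 1$ so the direction of the inequality is preserved); hence $A$ is positive definite.

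The main obstacle is step (3) — pinning down the correct spectral bound for the simple random walk on a thick generalized $m$-gon and, in particular, making sure the biregular (rather than regular) case is handled so that the bound $\frac{2\sqrt q}{q+1}\cos(\pi/m)$ comes out with the stated constant. Everything else is bookkeeping: identifying $X_\tau$ as a generalized polygon via the building axioms, and the final eigenvalue arithmetic via Proposition \ref{smallest e.v. proposition}.
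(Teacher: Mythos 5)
Your proposal follows essentially the same route as the paper: an entrywise comparison of $A$ with $\frac{2\sqrt q}{q+1}C+(1-\frac{2\sqrt q}{q+1})I$, reduced to the spectral bound $\lambda_{i,j}\le\frac{2\sqrt q}{q+1}\cos(\pi/m_{i,j})$ for thick generalized $m_{i,j}$-gons --- which the paper obtains by quoting Feit--Higman to restrict $m_{i,j}\in\{2,3,4,6,8\}$ and then Garland's Theorem 7.10 for the (biregular) eigenvalue computations --- followed by the eigenvalue arithmetic via Proposition \ref{smallest e.v. proposition}. One small correction to your sketch of the cited spectral fact: the intermediate expression $\frac{2\sqrt{st}}{\sqrt{(s+1)(t+1)}}\cos(\pi/m)$ should be $\frac{2(st)^{1/4}}{\sqrt{(s+1)(t+1)}}\cos(\pi/m)$ (coming from $\theta^2=4\sqrt{st}\cos^2(\pi/m)+(\sqrt s-\sqrt t)^2$), since as written it tends to $2\cos(\pi/m)$ for large $s=t$ and would not yield positive definiteness; with that fix your monotonicity argument in $s,t\ge q$ gives exactly the constant the paper extracts from Garland's case-by-case list.
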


\begin{proof}
Let $X$ be as above. As a simplicial complex, $X$ is a partite and we fix a type function on the vertices. For every $0 \leq i,j \leq n$, $i \neq j$, we denote $X_{i,j}$ to be the $1$-dimensional link $X_\tau$, where $\tau \in X(n-2), \type (\tau) = \lbrace 0,...,n \rbrace \setminus \lbrace i,j \rbrace$. Also denote $\lambda_{i,j}$ to be the second largest eigenvalue of the simple random walk on $X_{i,j}$. By our assumption $X_{i,j}$ is a finite graph and since $X$ is a building of thickness $\geq q+1$, $X_{i,j}$ is a $1$-dimensional spherical building of minimal degree $\geq q+1$. Let $M$ be the Coxeter matrix of the Coxeter system associated with $X$ such that $M$ is indexed according to our type function, i.e., the entries of $M$ are indexed by $\lbrace 0,...,n\rbrace$ and for every $i,j, i \neq j$, $m_{i,j}$ is the diameter of $X_{i,j}$. 

We recall that by a classical result of Feit and Higman \cite{FeitH}, for every $i \neq j$, if $X_{i,j}$ has a minimal degree $>2$, then $m_{i,j} \in \lbrace 2,3,4,6,8 \rbrace$ and in \cite[Theorem 7.10]{Garland} $\lambda_{i,j}$ was computed for every such choice of $m_{i,j}$. The computations of \cite[Theorem 7.10]{Garland} allow $X_{i,j}$ to be a bi-regular graph and $\lambda_{i,j}$ is computed according to the degrees of $X_{i,j}$, but here we will only state the results assuming that the degrees are greater or equal to $q+1$: Let $i,j, i \neq j$, 
\begin{itemize}
\item If $m_{i,j} = 2$, then $\lambda_{i,j} = 0$.
\item If $m_{i,j} = 3$, then $\lambda_{i,j} \leq \frac{\sqrt{q}}{q+1}$.
\item If $m_{i,j} = 4$, then $\lambda_{i,j} \leq \sqrt{2}\frac{\sqrt{q}}{q+1}$.
\item If $m_{i,j} = 6$, then $\lambda_{i,j} \leq \sqrt{3}\frac{\sqrt{q}}{q+1}$.
\item If $m_{i,j} = 8$, then $\lambda_{i,j} \leq \sqrt{2+\sqrt{2}}\frac{\sqrt{q}}{q+1}$.
\end{itemize}
We observe that in all the inequalities above, $\lambda_{i,j} \leq \cos(\frac{\pi}{m_{i,j}}) \frac{2\sqrt{q}}{q+1}$ and thus
$$A \succeq 2\frac{\sqrt{q}}{q+1}C +(1-2\frac{\sqrt{q}}{q+1})I,$$
as needed. 
\end{proof}

This Lemma readily applies the following more general form of Theorem \ref{vanishing coho for general building thm} that appeared in the introduction:
\begin{theorem}
\label{general statement buildings thm}
Let $G$ be a BN-pair group acting on a building $X$ such that $X$ is $n$-dimensional with $n \geq 2$ and all the $1$-dimensional links of $X$ are finite. Denote $C$ to be the cosine matrix of the Coxeter system associated with the Coxeter group that arises from the BN-pair of $G$ and $\widetilde{\mu}$ to be the smallest eigenvalue of $C$. If $X$ has thickness $\geq q+1$, where $q \geq 2$ and $\widetilde{\mu} > 1 - \frac{q+1}{2 \sqrt{q}}$, then:
\begin{enumerate}
\item For every continuous unitary representation $\pi$ of $G$, $H^k (X, \pi) =0$ for every $1 \leq k \leq n-1$.
\item If $1 \leq k \leq n-1$ is a constant such that all the $k$-dimensional links of $X$ are finite, then $H^i (G, \pi) = 0$ for every $1 \leq i \leq k$ and every continuous unitary representation $\pi$ of $G$. 
\item
$ H^* (G, \pi) = \bigoplus _{\sigma \subseteq \Delta }\widetilde{H}^{* -1}(D_{\sigma };\mathcal{H}^{\sigma })$ and these cohomology spaces are Hausdorff.
\end{enumerate}
\end{theorem}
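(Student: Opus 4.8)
The plan is to obtain this theorem as a direct consequence of Theorem~\ref{general statement of general thm} combined with Lemma~\ref{bound on A in building lemma}: essentially all of the analytic work has already been done, so what remains is to check that the pair $(X,G)$ satisfies the axioms $(\mathcal{B}1)$--$(\mathcal{B}4)$ and then to convert the thickness hypothesis into positive definiteness of the cosine matrix $A(X)$ via the matrix inequality of Lemma~\ref{bound on A in building lemma}.

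First I would verify that $(X,G)$ fulfils $(\mathcal{B}1)$--$(\mathcal{B}4)$. Condition $(\mathcal{B}1)$ is part of the hypothesis. Condition $(\mathcal{B}4)$ is in effect the definition of a BN-pair group and its building: $X$ is a colourable pure $n$-dimensional complex, and the Bruhat decomposition furnishes a type-preserving, chamber-transitive action of $G$ on $X$. For $(\mathcal{B}2)$ one uses that every link $X_\sigma$ of a simplex in a building is again a building with the induced Coxeter data, and that buildings, being chamber systems, are gallery connected. For $(\mathcal{B}3)$ one invokes the standard dichotomy for buildings: if the parabolic subgroup cutting out $X_\sigma$ is infinite then the geometric realization of $X_\sigma$ is contractible, whereas if that parabolic is finite then $X_\sigma$ is a \emph{spherical} building whose rank-$2$ residues are rank-$2$ residues of $X$, hence finite generalized polygons by $(\mathcal{B}1)$; this forces every panel of $X_\sigma$ to be finite, and since a spherical building has bounded diameter, $X_\sigma$ is finite. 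Applying the same reasoning to $X$ itself (it is either finite spherical or non-spherical and contractible) completes $(\mathcal{B}3)$.

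Next I would run the eigenvalue estimate. Lemma~\ref{bound on A in building lemma} gives $A(X)\succeq \tfrac{2\sqrt q}{q+1}C+\bigl(1-\tfrac{2\sqrt q}{q+1}\bigr)I$, and both sides have $1$'s on the diagonal and non-positive off-diagonal entries ($-\lambda_{i,j}\le 0$ on the left, $-\tfrac{2\sqrt q}{q+1}\cos(\pi/m_{i,j})\le 0$ on the right), so Proposition~\ref{smallest e.v. proposition} shows the smallest eigenvalue of $A(X)$ is at least that of the right-hand side. Since $\tfrac{2\sqrt q}{q+1}\ge 0$, the smallest eigenvalue of $\tfrac{2\sqrt q}{q+1}C+\bigl(1-\tfrac{2\sqrt q}{q+1}\bigr)I$ equals $1-\tfrac{2\sqrt q}{q+1}(1-\widetilde{\mu})$, and the hypothesis $\widetilde{\mu}>1-\tfrac{q+1}{2\sqrt q}$ rearranges exactly to $\tfrac{2\sqrt q}{q+1}(1-\widetilde{\mu})<1$; hence this quantity is positive and $A(X)$ is positive definite.

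With $(\mathcal{B}1)$--$(\mathcal{B}4)$ in place and $A(X)$ positive definite, Theorem~\ref{general statement of general thm} applies verbatim and delivers the three conclusions: vanishing of $H^k(X,\pi)$ for $1\le k\le n-1$, vanishing of $H^i(G,\pi)$ up to the finiteness rank, and the Hausdorff cohomology computation $H^*(G,\pi)=\bigoplus_{\sigma\subseteq\Delta}\widetilde{H}^{*-1}(D_\sigma;\mathcal{H}^\sigma)$. The only step that is not purely mechanical is the verification of $(\mathcal{B}3)$, i.e.\ the ``finite-or-contractible'' dichotomy for $X$ and all of its links; this is where one must appeal to structural facts about buildings (contractibility of non-spherical buildings, and local finiteness being forced by finiteness of all rank-$2$ residues) rather than to anything proved earlier in this paper. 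Everything else is a straight application of Lemma~\ref{bound on A in building lemma}, Proposition~\ref{smallest e.v. proposition} and Theorem~\ref{general statement of general thm}.
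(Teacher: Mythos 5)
Your proof is correct and follows essentially the same route as the paper's: apply Lemma~\ref{bound on A in building lemma} together with Proposition~\ref{smallest e.v. proposition} to bound the smallest eigenvalue of $A(X)$ below by $\frac{2\sqrt{q}}{q+1}\widetilde{\mu}+1-\frac{2\sqrt{q}}{q+1}>0$, and then invoke Theorem~\ref{general statement of general thm}. The only difference is that you explicitly verify $(\mathcal{B}1)$--$(\mathcal{B}4)$ for the building via standard structure theory, a step the paper leaves implicit.
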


\begin{proof}[Proof of Theorem \ref{vanishing coho for general building thm}]
Let $X, G$ be as in Theorem \ref{vanishing coho for general building thm} and let $C$ be the cosine matrix of the Coxeter complex (i.e., the apartment) of $X$. Denote by $\mu$ the smallest eigenvalue of $A(X)$ and by $\widetilde{\mu}$ the smallest eigenvalue of $C$. By our assumption, $X$ has thickness $\geq q+1$ and thus by Lemma \ref{bound on A in building lemma} and Proposition \ref{smallest e.v. proposition}, 
$$\mu \geq 2 \frac{\sqrt{q}}{q+1} \widetilde{\mu} + 1-2\frac{\sqrt{q}}{q+1}.$$
It follows that if $\widetilde{\mu} > 1 - \frac{q+1}{2 \sqrt{q}}$, then $\mu >0$, i.e., $A(X)$ is positive definite and the assertions above follow from Theorem \ref{general statement of general thm}.
\end{proof}

The sharp vanishing result for affine buildings stated in of Theorem \ref{Vanishing for affine buildings} is a consequence of Theorem \ref{vanishing coho for general building thm} and of a well-establish fact regrading the cosine matrix of affine Coxeter complexes:
\begin{proof}[Theorem \ref{Vanishing for affine buildings}]
Let $G$ be a BN-pair group such that the building $X$ coming from the BN-pair of $G$ is an $n$-dimensional, non-thin affine building, with $n\geq 2$ and let $C$ be the cosine matrix of the Coxeter complex (i.e., the apartment) of $X$. Since $X$ is non-thin it has thickness $\geq 3$ and therefore by Theorem \ref{vanishing coho for general building thm}, it is enough to prove that 
$\widetilde{\mu} > 1 - \frac{3}{2 \sqrt{2}}$, where $\widetilde{\mu}$ is the smallest eigenvalue of $C$. By \cite[Theorem 6.8.12]{Davis}, the cosine matrix of an affine Coxeter complex is positive semidefinite (with co-rank $1$) and thus 
$\widetilde{\mu} =0 > 1 - \frac{3}{2 \sqrt{2}}$ as needed.

Note that all the links of $X$ that are not $X$ itself are compact and therefore by Theorem \ref{General vanishing coho thm}, $H^i (G, \pi) =0$ for every $1 \leq i \leq n-1$ and every continuous unitary representation $\pi$.
\end{proof}

Another consequence of Theorem \ref{vanishing coho for general building thm} is vanishing of cohomology for Kac-Moody groups acting on Kac-Moody buildings given that the thickness is large enough and all the $1-$dimensional link are finite. This was already proved by Dymara and Januszkiewicz \cite{Dymara} where the condition on the thickness was that is should be greater or equal to $\frac{1}{25}(1764)^n$, where $n$ is the dimension of the building. Theorem \ref{vanishing coho for general building thm} shows that the same vanishing result holds under a much weaker condition on the thickness. In order to illustrate this point, we preform an exact calculation of a specific example of a (hyperbolic) Kac-Moody group (this example was chosen rather arbitrarily and one can preform similar computations for any specific example of a BN-pair group):

\begin{example}
Consider the Coxeter group whose Coxeter diagram is a square such that one of the edges is labelled $4$ and  remaining ones are labelled $3$. By the work of Tits \cite{Tits}, for every finite field $\mathbb{F}_q$, there is a BN-pair group $G (q)$ acting on a building $X$ with thickness $q+1$ and the Coxeter group as above. Note that according to the Coxeter diagram the links of the vertices are spherical building (see \cite[Section 4]{Dymara2}) and in particular the links of all the vertices are finite and we can apply Theorem \ref{vanishing coho for general building thm}. The cosine matrix of this Coxeter group is 
$$\begin{pmatrix}
1&-\frac{1}{\sqrt{2}}&-\frac{1}{2}&0\\
-\frac{1}{\sqrt{2}}&1&-\frac{1}{2}&0\\
-\frac{1}{2}&-\frac{1}{2}&1&-\frac{1}{2}\\
0&0&-\frac{1}{2}&1
\end{pmatrix}$$ 
and its smallest eigenvalue is $\frac{-\sqrt{2}+1}{2}$. Note that for every $q \geq 4$, it holds that $\frac{-\sqrt{2}+1}{2}>1-\frac{q+1}{2\sqrt{q}}$ and the conditions of Theorem \ref{vanishing coho for general building thm} are satisfied. Thus, in this example, for every $q \geq 4$ and every unitary representation of $G (q)$, it holds that $H^1 (G (q), \pi) = H^2 (G(q), \pi) =0$.  
\end{example}
\appendix

\section{Equivariant embedding interpretation of our criterion}
\label{Equivariant embedding interpretation of our criterion appnx}

The aim of this appendix is to give a geometric interpretation to our vanishing criterion, i.e., to give a geometric meaning to the condition that the cosine matrix of a complex $X$ is positive definite. Let $X$ be an $n$-dimensional simplicial complex and $G$ be a group acting simplicially on $X$ such that $(X,G)$ fulfill $(\mathcal{B} 1)- (\mathcal{B} 4)$. 

Let $(\pi, \mathcal{H})$ be a unitary representation of $G$. Below, we fix $\triangle = \lbrace x_0,...,x_n \rbrace \in X(n)$ use the notations of Section \ref{Vanishing of Cohomology for groups acting on simplicial complexes sec}, i.e.:
\begin{enumerate}
\item For $\tau \subseteq \triangle$, $G_\tau$ is the stabilizer subgroup of $\tau$ in $G$.
\item For $i \in \lbrace 0,...,n \rbrace$, $V_i (\pi) = \mathcal{H}^{\pi (G_{\lbrace x_k : k \in \lbrace 0,...,n \rbrace \setminus \lbrace i \rbrace \rbrace})}$.
\item For $\tau \subseteq \triangle$, $\mathcal{H}_\tau = \bigcap_{i \notin \tau} V_i (\pi) = \mathcal{H}^{\pi (G_\tau)}$. 
\end{enumerate}

\begin{lemma}
\label{generation lemma}
Let $(X,G)$, $ \triangle = \lbrace x_0,...,x_n \rbrace \in X(n)$ and $\lbrace G_\tau \rbrace_{\tau \subseteq  \triangle}$ be as above. For every $\tau, \tau ' \subseteq  \triangle$, $\langle G_\tau, G_{\tau '} \rangle = G_{\tau \cap \tau '}$.
\end{lemma}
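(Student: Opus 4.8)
The plan is to prove $\langle G_\tau, G_{\tau'}\rangle = G_{\tau \cap \tau'}$ by a double inclusion, using the structure coming from the fact that the action is transitive on $X(n)$ and type preserving, together with the key fact (already quoted in the paper from \cite[Proposition 4.1]{Dymara}) that for $\sigma \subsetneq \triangle$ the stabilizer $G_\sigma$ is generated by the stabilizers $\{G_\rho : \rho \subseteq \triangle, \rho \in X(n-1), \sigma \subseteq \rho\}$ of the codimension-one faces of $\triangle$ containing $\sigma$. The inclusion $\langle G_\tau, G_{\tau'}\rangle \subseteq G_{\tau\cap\tau'}$ is immediate: any element fixing $\tau$ (pointwise, or as a set — same thing here since the action is type preserving) fixes the subsimplex $\tau\cap\tau'$, and likewise for $G_{\tau'}$, so the group they generate lies in $G_{\tau\cap\tau'}$.

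For the reverse inclusion, I would first reduce to the case of codimension-one faces. Write $\eta = \tau \cap \tau'$. If $\eta = \triangle$ then $\tau=\tau'=\triangle$ and there is nothing to prove, so assume $\eta \subsetneq \triangle$. By the Dymara--Januszkiewicz generation result, $G_\eta = \langle G_\sigma : \sigma \subseteq \triangle, \sigma \in X(n-1), \eta \subseteq \sigma\rangle$. Each such codimension-one face $\sigma$ omits exactly one vertex of $\triangle$; since $\eta \subseteq \sigma$, the omitted vertex lies outside $\eta$, i.e. outside $\tau$ or outside $\tau'$. Hence every generator $G_\sigma$ of $G_\eta$ satisfies either $\tau \subseteq \sigma$ (so $G_\sigma \subseteq G_\tau$) or $\tau' \subseteq \sigma$ (so $G_\sigma \subseteq G_{\tau'}$) — and in fact one should be a little careful: $\tau \subseteq \sigma$ need not hold just because the omitted vertex is outside $\tau$, unless $\tau$ itself has codimension $\leq 1$. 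So the clean argument is to apply the generation result once more: $G_\tau = \langle G_\sigma : \sigma \in X(n-1), \tau \subseteq \sigma\rangle$ and $G_{\tau'} = \langle G_\sigma : \sigma \in X(n-1), \tau' \subseteq \sigma\rangle$, and then observe that $\{\sigma \in X(n-1) : \eta \subseteq \sigma\} = \{\sigma : \tau \subseteq \sigma\} \cup \{\sigma : \tau' \subseteq \sigma\}$, because a codimension-one face $\sigma$ omits a single vertex $x_k$, and $\eta \subseteq \sigma \iff x_k \notin \eta = \tau \cap \tau' \iff x_k \notin \tau \text{ or } x_k \notin \tau' \iff \tau \subseteq \sigma \text{ or } \tau' \subseteq \sigma$. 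Therefore every generator of $G_\eta$ is a generator of $G_\tau$ or of $G_{\tau'}$, giving $G_\eta \subseteq \langle G_\tau, G_{\tau'}\rangle$.

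Putting the two inclusions together yields the equality. The one point requiring genuine care — and the step I expect to be the main obstacle, though it is more bookkeeping than depth — is the set-theoretic identity on codimension-one faces and making sure the reduction via \cite[Proposition 4.1]{Dymara} is applied to $\tau$, $\tau'$, and $\eta$ uniformly (in particular handling the edge cases where $\tau$, $\tau'$, or $\eta$ equals $\triangle$, where the proposition does not directly apply but the statement is trivial). Everything else is formal manipulation of stabilizer subgroups of a type-preserving action.
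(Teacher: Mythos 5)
Your proof is correct, but it takes a different route from the paper's. You reduce everything to the quoted fact from \cite[Proposition 4.1]{Dymara} that for $\eta\subsetneq\triangle$ the stabilizer $G_\eta$ is generated by the stabilizers of the codimension-one faces of $\triangle$ containing $\eta$, and then use the combinatorial identity that a codimension-one face contains $\tau\cap\tau'$ iff it contains $\tau$ or contains $\tau'$ (since such a face omits a single vertex). That identity is right, and incidentally the caveat you raise about it is a non-issue: for $\sigma=\triangle\setminus\{x_k\}$ and any $\tau\subseteq\triangle$, the condition $x_k\notin\tau$ \emph{is} equivalent to $\tau\subseteq\sigma$, with no restriction on the codimension of $\tau$, so your first, shorter version of the argument already works and the second application of the generation result is not needed (you only need $G_\sigma\subseteq G_\tau$, which follows from type preservation). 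The paper instead gives a self-contained argument: it first reduces to the case $\tau\cap\tau'=\emptyset$ by passing to the link of $\tau\cap\tau'$ (where the pair still satisfies $(\mathcal{B}1)$--$(\mathcal{B}4)$), and then shows $\langle G_\tau,G_{\tau'}\rangle=G$ by induction on the length of a gallery from $\triangle$ to $g.\triangle$, using gallery connectedness $(\mathcal{B}2)$ and transitivity with type preservation $(\mathcal{B}4)$ to peel off one codimension-one stabilizer at a time. Your version is shorter and buys economy by outsourcing the geometric content to the cited proposition (whose own proof is essentially that gallery induction); the paper's version buys self-containedness and, as a byproduct, an explicit bound $g\in(G_\tau\cup G_{\tau'})^{k+1}$ in terms of gallery length.
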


\begin{proof}
If $\tau \subseteq \tau'$ or $\tau ' \subseteq \tau$, there is nothing to prove, thus we will assume that this is not the case and, in particular, that $\vert \tau \cap \tau ' \vert \leq n-1$. 

Observe that for every $\vert \tau \cap \tau ' \vert \leq n-1$, the couple $(X_{\lbrace x_j : j \in \tau \cap \tau ' \rbrace}, G_{\tau \cap \tau '} )$ also fulfill $(\mathcal{B} 1)- (\mathcal{B} 4)$ and if we fix $\lbrace x_j : j \in \lbrace 0,...,n \rbrace \setminus \tau \cap \tau ' \rbrace$ to be a fundamental domain of $X_{\lbrace x_j : j \in \tau \cap \tau ' \rbrace}$, we will get exactly the subgroups $\lbrace G_{\tau \cap \tau ' \cup \eta} \rbrace_{\eta \subseteq  \triangle \setminus \tau \cap \tau '}$. Thus, it is enough to prove that if $\tau \cap \tau ' = \emptyset$, it follows that $\langle G_\tau, G_{\tau '} \rangle = G$.

By definition, $G_\tau, G_{\tau ' } \subseteq G$, thus $\langle G_\tau, G_{\tau '} \rangle \subseteq G$. Next, we will show that for every $g \in G$, $g$ can be written as a product of elements in $G_\tau \cup G_{\tau '}$.

Fix some $g \in G$. By $(\mathcal{B} 2)$, $\triangle$ and $g.  \triangle$ are connected by a gallery, i.e., there are $n$-dimensional simplices $\sigma_1,...,\sigma_k$ such that $\sigma_j \cap \sigma_{j+1} \in X(n-1)$ and $ \triangle = \sigma_1,...,\sigma_k = g.  \triangle$. We will prove that if $k$ is the length of the connecting gallery, then $g \in  (G_\tau \cup G_{\tau '})^{k+1}$. 

The proof is by induction on $k$. If $k=0$, then $g. \triangle =   \triangle$ and thus $g \in G_{\triangle} \subseteq G_\tau$. 

Assume that our claim is true for $k-1$ and let $ \triangle = \sigma_1,...,\sigma_k = g.  \triangle$. Note that since $\sigma_1 \cap \sigma_2 \in X(n-1)$, there is some $i_0$ such that $\sigma_1 \cap \sigma_2 = \lbrace x_i : i \in  \triangle \setminus \lbrace i_0 \rbrace \rbrace$. By $(\mathcal{B} 4)$, it follows that there is some $g' \in G_{\triangle \setminus \lbrace i_0 \rbrace}$ such that $\sigma_1 = g'.\sigma_2$. Note that $G_{\triangle \setminus \lbrace i_0 \rbrace} \subseteq G_\tau \cup G_{\tau '}$ and thus $g ' \in G_\tau \cup G_{\tau '}$. Thus, $\sigma_2 = g'. \sigma_1,...,\sigma_k = g.\triangle$ is a gallery of length $k-1$ and also $\sigma_1, (g')^{-1}.\sigma_3,...,(g')^{-1}. \sigma_k = (g')^{-1}g. \triangle$ is a gallery of length $k-1$. It follows that $(g')^{-1}g \in (G_\tau \cup G_{\tau '})^{k}$ and therefore $g \in g' (G_\tau \cup G_{\tau '})^{k} \subseteq (G_\tau \cup G_{\tau '})^{k+1}$ as needed. 
\end{proof}
   
Let $\phi : X (0) \rightarrow \mathcal{H}$. We recall that the map $\phi$ is called  \textit{equivariant} (with respect to $\pi$) if for every $g \in G$ and every vertex $x$ of $X$ it holds that $\phi (g.x) = \pi (g) \phi (x)$.

\begin{observation}
Fix $\triangle = \lbrace x_0,...,x_n \rbrace \in X(n)$ and let $(\pi, \mathcal{H})$ be a unitary representation of $G$. We observe that by $(\mathcal{B} 4)$, an equivariant map $\phi : X (0) \rightarrow \mathcal{H}$ is uniquely determined by the choices of $\phi (x_i), i=0,...,n$ and that $\phi (x_i) \in \mathcal{H}^{\pi (G_{\lbrace x_i \rbrace})}$ for every $i=0,...,n$. Vice-versa, every choice $v_i \in \mathcal{H}^{\pi (G_{\lbrace x_i \rbrace})}, i=0,...,n$ defines an equivariant map $\phi$ via $\phi (g. x_i) = \pi (g)v_i$ (note that this indeed defines a well-defined equivariant map $\phi : X (0) \rightarrow \mathcal{H}$).
\end{observation}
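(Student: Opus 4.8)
The plan is to verify the three claims of the observation in turn, all of them consequences of the $(n+1)$-partite structure of $X$ together with $(\mathcal{B}4)$. The one preliminary fact on which everything rests is that the $G$-orbits of $x_0,\dots,x_n$ partition $X(0)$. For exhaustion: since $X$ is pure $n$-dimensional, an arbitrary vertex $x$ lies in some $\sigma\in X(n)$; by transitivity of the $G$-action on $X(n)$ there is $g\in G$ with $g.\triangle=\sigma$; and since the action is type preserving while $X$ is $(n+1)$-partite — so $x_0,\dots,x_n$ carry pairwise distinct types — we get $x=g.x_i$, where $i$ is the unique index with $\type(x_i)=\type(x)$. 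For disjointness: if $g.x_i=h.x_j$ then $\type(x_i)=\type(g.x_i)=\type(h.x_j)=\type(x_j)$, forcing $i=j$.

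Given this, the first claim is immediate: if $\phi$ is equivariant then $\phi(x)=\phi(g.x_i)=\pi(g)\phi(x_i)$ whenever $x=g.x_i$, so $\phi$ is completely determined by the tuple $(\phi(x_0),\dots,\phi(x_n))$. The second claim also follows directly from equivariance: for $g\in G_{\{x_i\}}$ one has $g.x_i=x_i$, hence $\pi(g)\phi(x_i)=\phi(g.x_i)=\phi(x_i)$, i.e.\ $\phi(x_i)\in\mathcal{H}^{\pi(G_{\{x_i\}})}$.

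For the converse, given $v_i\in\mathcal{H}^{\pi(G_{\{x_i\}})}$ for $i=0,\dots,n$, I would define $\phi\colon X(0)\to\mathcal{H}$ by $\phi(g.x_i)=\pi(g)v_i$; by the exhaustion statement this assigns a value to every vertex, and the only thing needing an argument is well-definedness. If $g.x_i=h.x_j$, then by the disjointness argument $i=j$, whence $h^{-1}g\in G_{\{x_i\}}$ and therefore $\pi(h^{-1}g)v_i=v_i$, i.e.\ $\pi(g)v_i=\pi(h)v_j$; so $\phi$ is well defined. Equivariance is then purely formal: $\phi\big(h.(g.x_i)\big)=\phi\big((hg).x_i\big)=\pi(hg)v_i=\pi(h)\pi(g)v_i=\pi(h)\phi(g.x_i)$.

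I do not expect a genuine obstacle: the whole statement is bookkeeping around the coloring of $X$. If anything deserves care it is precisely the two halves of the orbit-partition fact used above — exhaustion (purity plus transitivity on chambers) and disjointness (type preservation plus distinctness of the colors of $x_0,\dots,x_n$) — since the uniqueness half of the observation rests on the former and the well-definedness in the converse direction rests on the latter.
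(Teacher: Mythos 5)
Your proof is correct and is exactly the routine bookkeeping the paper leaves implicit (the Observation is stated without proof): the orbit-partition fact from purity, transitivity on $X(n)$, and type preservation, followed by the well-definedness check via the stabilizer $G_{\{x_i\}}$. Nothing is missing.
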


\begin{proposition}
\label{general position prop}
Let $(\pi, \mathcal{H})$ be a unitary representation of $G$ without any non-trivial invariant vectors and $\phi : X(0) \rightarrow \mathcal{H}$ be an equivariant map such that $\phi (x) \neq 0$ for every $x \in X(0)$. Then for every $\tau \subsetneq \triangle$ and every $x_i \notin \tau$, $P_{\mathcal{H}_\tau} \phi (x_i) \neq \phi (x_i)$.
\end{proposition}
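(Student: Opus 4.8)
The plan is to prove the equivalent statement that $\phi(x_i) \notin \mathcal{H}_\tau$ whenever $x_i \notin \tau$. Since $P_{\mathcal{H}_\tau}$ is the orthogonal projection onto the \emph{closed} subspace $\mathcal{H}_\tau$, one has $P_{\mathcal{H}_\tau}\phi(x_i) = \phi(x_i)$ if and only if $\phi(x_i) \in \mathcal{H}_\tau$, so it suffices to rule out the latter membership.

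First I would recall two facts already recorded above. By item (3) of the notation set up at the beginning of this appendix (equivalently, by the remark following Theorem \ref{DJ thm2}), $\mathcal{H}_\tau = \mathcal{H}^{\pi(G_\tau)}$ is exactly the subspace of vectors fixed by $\pi(G_\tau)$. And by the Observation preceding this proposition, equivariance of $\phi$ forces $\phi(x_i) \in \mathcal{H}^{\pi(G_{\{x_i\}})}$, i.e.\ $\phi(x_i)$ is fixed by every element of $\pi(G_{\{x_i\}})$.

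Now suppose, toward a contradiction, that $\phi(x_i) \in \mathcal{H}_\tau$. Then $\phi(x_i)$ is fixed by every element of $G_\tau$ and of $G_{\{x_i\}}$, hence by the subgroup $\langle G_\tau, G_{\{x_i\}}\rangle$ that they generate. Since $x_i \notin \tau$, we have $\tau \cap \{x_i\} = \emptyset$, so Lemma \ref{generation lemma} (applied with $\tau' = \{x_i\}$) gives $\langle G_\tau, G_{\{x_i\}}\rangle = G_{\emptyset} = G$. Thus $\phi(x_i)$ is a $\pi(G)$-invariant vector, and it is nonzero by hypothesis — contradicting the assumption that $(\pi,\mathcal{H})$ has no nontrivial invariant vectors. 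Therefore $\phi(x_i) \notin \mathcal{H}_\tau$, which is what we wanted. I expect essentially no genuine obstacle here: the proof is a direct combination of Lemma \ref{generation lemma}, the equivariance Observation, and the no-invariant-vectors hypothesis. The only delicate point is the bookkeeping — checking that the case hypothesis $x_i \notin \tau$ is precisely what makes $\tau \cap \{x_i\}$ empty, so that the generated subgroup is all of $G$ (recalling the convention $G_\emptyset = G$).
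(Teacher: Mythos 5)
Your proof is correct and follows exactly the same route as the paper's: assume $\phi(x_i)\in\mathcal{H}_\tau$, note it is then fixed by both $G_\tau$ and $G_{\{x_i\}}$, invoke Lemma \ref{generation lemma} to conclude it is fixed by $G_\emptyset=G$, and contradict the no-invariant-vectors hypothesis. No gaps.
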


\begin{proof}
Assume toward contradiction that $P_{\mathcal{H}_\tau} \phi (x_i) = \phi (x_i)$, then by the observation $\phi (x_i) \in \mathcal{H}_\tau \cap \mathcal{H}_{\lbrace x_i \rbrace}$ and thus it is stabilized by both $G_\tau$ and $G_{\lbrace x_i \rbrace}$. By our assumption $x_i \notin \tau$ and thus $\phi (x_i)$ is stabilized by $\langle G_\tau, G_{\lbrace x_i \rbrace} \rangle = G_\emptyset = G$ (here we use Lemma \ref{generation lemma}). Therefore $\phi (x_i)$ is a non-trivial invariant vector and this contradicts our assumption. 
\end{proof}

We define $\phi : X (0) \rightarrow \mathcal{H}$ to be an \textit{equivariant embedding of $X$ into the unit sphere of $\mathcal{H}$} if the following holds:
\begin{enumerate}
\item For every $x \in X(0)$, $\Vert \phi (x) \Vert =1$.
\item The map $\phi$ is equivariant.
\item For every $i \neq j$, $P_{\mathcal{H}_{\triangle_{i,j}}} \phi (x_i) \in \Span \lbrace \phi (x) : x \in \triangle_{i,j} \rbrace$, where $\triangle_{i,j} = \lbrace x_k : k \in \lbrace 0,...,n \rbrace \setminus \lbrace i,j \rbrace \rbrace$. 
\end{enumerate}
The last condition may be thought of a being in general position with respect to the subspaces $V_i (\pi)$. 

The following Theorem gives a geometric interpretation of our criterion that the cosine matrix $A = A(X)$ is positive definite. Basically it states that for any representation $(\pi, \mathcal{H})$ with no non-trivial invariant vectors, an $n$-simplex of $X$ is mapped to a spherical simplex of that cannot be ``too small'' (there is a lower bound on the spherical $n$-volume of the image). 

\begin{theorem}
Assume that the cosine matrix $A=A(X)$ of $X$ is positive definite. Then there is a constant $\alpha >0$ that depends on the smallest positive eigenvalue of $A$ such that for every unitary representation $(\pi, \mathcal{H})$ without non-trivial invariant vectors and any $\phi : X(0) \rightarrow \mathcal{H}$ equivariant embedding of $X$ into the unit sphere of $\mathcal{H}$, $\lbrace \phi (x_0),...,\phi (x_n) \rbrace$ are vertices of an $n$-dimensional spherical simplex with a spherical $n$-volume of at least $\alpha$.
\end{theorem}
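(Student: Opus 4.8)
The plan is to compare the ``combinatorial'' cosine matrix $A=A(X)$ with the ``geometric'' cosine matrix of the facet hyperplanes of the spherical simplex spanned by $\phi(x_0),\dots,\phi(x_n)$, and then to invoke the classical lower bound for the volume of a spherical simplex in terms of the smallest eigenvalue of that cosine matrix, as recalled in the introduction (\cite[Chapter 7, Proof of Theorem 2.1]{AVSBook}). Throughout write $\sigma_i=\triangle\setminus\{x_i\}$ for the codimension-one face opposite $x_i$, so that $V_i(\pi)=\mathcal H^{\pi(G_{\sigma_i})}$, and let $\mu>0$ be the smallest eigenvalue of $A(X)$.

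First I would check that the vertices are linearly independent. Since $\phi$ is equivariant, $\phi(x_k)$ is fixed by $\pi(G_{\{x_k\}})$, hence by $\pi(G_{\sigma_i})$ whenever $x_k\in\sigma_i$, i.e.\ whenever $k\ne i$; thus $\Span\{\phi(x_k):k\ne i\}\subseteq V_i(\pi)$. On the other hand Proposition \ref{general position prop}, applied with $\tau=\sigma_i$, gives $\phi(x_i)\notin V_i(\pi)$. Hence $\phi(x_i)\notin\Span\{\phi(x_k):k\ne i\}$ for every $i$, so $\phi(x_0),\dots,\phi(x_n)$ are linearly independent. Put $W=\Span\{\phi(x_0),\dots,\phi(x_n)\}$, so $\dim W=n+1$, and for $0\le i\le n$ let $\widehat V_i=\Span\{\phi(x_k):k\ne i\}$, the hyperplane of $W$ opposite $\phi(x_i)$. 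Since $\widehat V_i+\widehat V_j=W$ for $i\ne j$, a dimension count in $W$ gives $\widehat V_i\cap\widehat V_j=L_{ij}:=\Span\{\phi(x_k):k\ne i,j\}$.

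The heart of the argument is the inequality $\cos\angle(\widehat V_i,\widehat V_j)\le\cos\angle(V_i(\pi),V_j(\pi))$ for $i\ne j$. Let $P$ denote the orthogonal projection of $\mathcal H$ onto $\mathcal H_{\triangle_{i,j}}=V_i(\pi)\cap V_j(\pi)$. For $k\ne i,j$ the vector $\phi(x_k)$ lies in both $V_i(\pi)$ and $V_j(\pi)$, so $P\phi(x_k)=\phi(x_k)\in L_{ij}$; and condition (3) in the definition of an equivariant embedding, used for the ordered pairs $(i,j)$ and $(j,i)$, gives $P\phi(x_i),P\phi(x_j)\in L_{ij}$. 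Hence $P(W)\subseteq L_{ij}$, so every $u\in W$ with $u\perp L_{ij}$ satisfies $\|Pu\|^2=\langle Pu,u\rangle=0$, that is $u\perp V_i(\pi)\cap V_j(\pi)$. Since moreover $\widehat V_i\subseteq V_i(\pi)$, $\widehat V_j\subseteq V_j(\pi)$ and $\widehat V_i\cap\widehat V_j=L_{ij}$, the definition of the angle between subspaces now yields $\cos\angle(\widehat V_i,\widehat V_j)\le\cos\angle(V_i(\pi),V_j(\pi))$. Combined with Lemma \ref{e.v. bounds cosine lemma} this gives $\cos\angle(\widehat V_i,\widehat V_j)\le\lambda_{i,j}$ for all $i\ne j$, i.e.\ $A(\widehat V_0,\dots,\widehat V_n)\succeq A(X)$; as both matrices have $1$'s on the diagonal and non-positive off-diagonal entries, Proposition \ref{smallest e.v. proposition} shows $A(\widehat V_0,\dots,\widehat V_n)$ is positive definite with smallest eigenvalue at least $\mu$.

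Finally, the linearly independent unit vectors $\phi(x_0),\dots,\phi(x_n)$ lie in a common open half-space, hence span a proper spherical $n$-simplex whose facet hyperplanes are the $\widehat V_i$ and whose cosine matrix is precisely $A(\widehat V_0,\dots,\widehat V_n)$; by the cited estimate its spherical $n$-volume is bounded below by a positive function of the smallest eigenvalue of this matrix, hence by a positive constant $\alpha=\alpha(\mu)$ depending only on $\mu$, the smallest (positive) eigenvalue of $A$. The genuinely delicate point is the key inequality of the third paragraph -- using the general-position condition (3) to replace the possibly large space $V_i(\pi)\cap V_j(\pi)$ by the concrete $(n-1)$-plane $L_{ij}$ when measuring the angle between the $\widehat V_i$'s; the remaining ingredients are elementary linear algebra, results established above, and the classical spherical-volume bound, whose exact form and sign conventions should be stated with care.
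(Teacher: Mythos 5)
Your proposal is correct and follows essentially the same route as the paper's proof: use Proposition \ref{general position prop} for linear independence, use condition (3) of the definition of an equivariant embedding to show that the angles between the facet hyperplanes $\widehat V_i=\Span\{\phi(x_k):k\ne i\}$ are dominated by the angles between the $V_i(\pi)$, deduce $A(\widehat V_0,\dots,\widehat V_n)\succeq A(X)$ via Lemma \ref{e.v. bounds cosine lemma} and Proposition \ref{smallest e.v. proposition}, and conclude with the spherical volume bound from \cite{AVSBook}. The only cosmetic difference is that you verify $u\perp V_i(\pi)\cap V_j(\pi)$ for all $u\in W$ with $u\perp L_{ij}$ via $P(W)\subseteq L_{ij}$, whereas the paper identifies the one-dimensional space $\widehat V_i\cap(\widehat V_i\cap\widehat V_j)^\perp$ explicitly; the two computations are equivalent.
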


\begin{proof}
Let $(\pi, \mathcal{H})$ be some unitary representation and $\phi : X(0) \rightarrow \mathcal{H}$ equivariant embedding of $X$ into the unit sphere of $\mathcal{H}$.

By Proposition \ref{general position prop},  $\phi (x_0),...,\phi (x_n)$ are in general position in $\mathcal{H}$ and thus span an $n$-dimensional spherical simplex in the $(n+1)$-dimensional subspace $V' = \Span \lbrace \phi (x_k) : k \in \lbrace 0,...,n \rbrace \rbrace$. Restricting our attention to $V'$, the spherical simplex at hand is bounded by the subspaces $V_i ' = \Span \lbrace \phi (x_k) : k \in \lbrace 0,...,n \rbrace \setminus \lbrace i \rbrace \rbrace$. The cosine matrix of these subspaces is defined as above:
$$A (V_0 ',...,V_n ') = \begin{cases} 
1 & i =j \\
- \cos (\angle (V_i ', V_j ')) & i \neq j 
\end{cases}.$$
In particular, the volume of the spherical simplex can be bounded from below by a positive increasing function on the smallest eigenvalue of $A (V_0 ',...,V_n ')$ (see \cite[Chapter 7, Proof of Theorem 2.1]{AVSBook}). Therefore in order to prove the Theorem it is sufficient to show that the smallest eigenvalue of $A(X)$ is a lower bound for the smallest eigenvalue of $A (V_0 ',...,V_n ')$.

We note that for every $i,j$, $V_i ' \cap V_j ' = \Span \lbrace \phi (x_k) : k \in \lbrace 0,...,n \rbrace \setminus \lbrace i,j \rbrace \rbrace$ and $V_i ' \cap (V_i ' \cap V_j ')^\perp$ is the one dimensional space that can be written as
$$V_i ' \cap (V_i ' \cap V_j ')^\perp = \Span \lbrace \phi (x_i) - P_{V_i ' \cap V_j '} \phi (x_i) \rbrace.$$
By our definition of an equivariant embedding, $P_{V_i ' \cap V_j '} \phi (x_i) = P_{V_i (\pi) \cap V_j (\pi)} \phi (x_i)$ and thus for every $i,j$, $V_i ' \cap (V_i ' \cap V_j ')^\perp \subseteq V_i (\pi) \cap (V_i (\pi) \cap V_j (\pi))^\perp$. It follows that 
$$\cos (\angle (V_i ', V_j ')) \leq \cos (\angle (V_i (\pi), V_j (\pi))).$$
By Corollary \ref{A(X) geq A (V_0,...)}, $A (V_0 ',..., V_n ') \succeq A(X)$ and thus by Proposition \ref{smallest e.v. proposition} the smallest eigenvalue of $A (V_0',...,V_n ')$ is bounded from below by the smallest eigenvalue of $A (X)$ as needed.
\end{proof}

\bibliographystyle{plain}
\bibliography{bibl}

\begin{thebibliography}{10}

\bibitem{ABrownBook}
Peter Abramenko and Kenneth~S. Brown.
\newblock {\em Buildings}, volume 248 of {\em Graduate Texts in Mathematics}.
\newblock Springer, New York, 2008.
\newblock Theory and applications.

\bibitem{AVSBook}
D.~V. Alekseevskij, \`E.~B. Vinberg, and A.~S. Solodovnikov.
\newblock Geometry of spaces of constant curvature.
\newblock In {\em Geometry, {II}}, volume~29 of {\em Encyclopaedia Math. Sci.},
  pages 1--138. Springer, Berlin, 1993.

\bibitem{Ballmann}
W.~Ballmann and J.~\'{S}wi\c{a}tkowski.
\newblock On {$L^2$}-cohomology and property ({T}) for automorphism groups of
  polyhedral cell complexes.
\newblock {\em Geom. Funct. Anal.}, 7(4):615--645, 1997.

\bibitem{Casselman}
W.~Casselman.
\newblock On a {$p$}-adic vanishing theorem of {G}arland.
\newblock {\em Bull. Amer. Math. Soc.}, 80:1001--1004, 1974.

\bibitem{Davis}
Michael~W. Davis.
\newblock {\em The geometry and topology of {C}oxeter groups}, volume~32 of
  {\em London Mathematical Society Monographs Series}.
\newblock Princeton University Press, Princeton, NJ, 2008.

\bibitem{Deutsch}
Frank Deutsch.
\newblock {\em Best approximation in inner product spaces}, volume~7 of {\em
  CMS Books in Mathematics/Ouvrages de Math\'{e}matiques de la SMC}.
\newblock Springer-Verlag, New York, 2001.

\bibitem{Dymara2}
Jan Dymara and Tadeusz Januszkiewicz.
\newblock New {K}azhdan groups.
\newblock {\em Geom. Dedicata}, 80(1-3):311--317, 2000.

\bibitem{Dymara}
Jan Dymara and Tadeusz Januszkiewicz.
\newblock Cohomology of buildings and their automorphism groups.
\newblock {\em Invent. Math.}, 150(3):579--627, 2002.

\bibitem{Ershov}
Mikhail Ershov and Andrei Jaikin-Zapirain.
\newblock Property ({T}) for noncommutative universal lattices.
\newblock {\em Invent. Math.}, 179(2):303--347, 2010.

\bibitem{EJZK}
Mikhail Ershov, Andrei Jaikin-Zapirain, and Martin Kassabov.
\newblock Property {$(T)$} for groups graded by root systems.
\newblock {\em Mem. Amer. Math. Soc.}, 249(1186):v+135, 2017.

\bibitem{FeitH}
Walter Feit and Graham Higman.
\newblock The nonexistence of certain generalized polygons.
\newblock {\em J. Algebra}, 1:114--131, 1964.

\bibitem{Garland}
Howard Garland.
\newblock {$p$}-adic curvature and the cohomology of discrete subgroups of
  {$p$}-adic groups.
\newblock {\em Ann. of Math. (2)}, 97:375--423, 1973.

\bibitem{Handbook}
Jacob~E. Goodman, Joseph O'Rourke, and Csaba~D. T\'{o}th, editors.
\newblock {\em Handbook of discrete and computational geometry}.
\newblock Discrete Mathematics and its Applications (Boca Raton). CRC Press,
  Boca Raton, FL, 2018.
\newblock Third edition of [ MR1730156].

\bibitem{GRO}
Zohar Grinbaum-Reizis and Izhar Oppenheim.
\newblock Curvature criterion for vanishing of group cohomology.
\newblock {\em Groups Geom. Dyn.}, 16(3):843--862, 2022.

\bibitem{Kassabov}
Martin Kassabov.
\newblock Subspace arrangements and property {T}.
\newblock {\em Groups Geom. Dyn.}, 5(2):445--477, 2011.

\bibitem{Oppenheim}
Tali Kaufman and Izhar Oppenheim.
\newblock Construction of new local spectral high dimensional expanders.
\newblock In {\em S{TOC}'18---{P}roceedings of the 50th {A}nnual {ACM} {SIGACT}
  {S}ymposium on {T}heory of {C}omputing}, pages 773--786. ACM, New York, 2018.

\bibitem{Kazhdan}
D.~A. Ka\v{z}dan.
\newblock On the connection of the dual space of a group with the structure of
  its closed subgroups.
\newblock {\em Funkcional. Anal. i Prilo\v{z}en.}, 1:71--74, 1967.

\bibitem{AveProjOpp}
Izhar Oppenheim.
\newblock Averaged projections, angles between groups and strengthening of
  {B}anach property ({T}).
\newblock {\em Math. Ann.}, 367(1-2):623--666, 2017.

\bibitem{VanBan}
Izhar Oppenheim.
\newblock Vanishing of cohomology with coefficients in representations on
  {B}anach spaces of groups acting on buildings.
\newblock {\em Comment. Math. Helv.}, 92(2):389--428, 2017.

\bibitem{Schneider}
P.~Schneider and U.~Stuhler.
\newblock The cohomology of {$p$}-adic symmetric spaces.
\newblock {\em Invent. Math.}, 105(1):47--122, 1991.

\bibitem{Andreas}
Andreas Thom, Alex Lubotzky, Lev Glebsky, and Marcus~De Chiffre.
\newblock Stability, cohomology vanishing, and non-approximable groups.
\newblock \url{https://arxiv.org/abs/1711.10238}, 2017.

\bibitem{Tits}
Jacques Tits.
\newblock Uniqueness and presentation of {K}ac-{M}oody groups over fields.
\newblock {\em J. Algebra}, 105(2):542--573, 1987.

\end{thebibliography}
\end{document}